\documentclass[smallextended]{svjour3}

\usepackage{amssymb}
\usepackage{epstopdf,comment}
\usepackage{  mathrsfs, enumerate}
\usepackage{amsmath}
\usepackage{enumitem}
\usepackage{hyperref}

\newcommand{\Var}{\operatorname{Var}}

\newcommand{\dom}{\operatorname{dom}}
\newcommand{\aff}{\operatorname{aff}}
\newcommand{\ri}{\operatorname{ri}}

\newcommand{\req}[1]{Eq.\,(\ref{#1})}

\begin{document}

\title{Distributionally Robust Variance Minimization: Tight Variance Bounds over $f$-Divergence Neighborhoods}
\titlerunning{ Distributionally Robust Variance Minimization}

\author{Jeremiah Birrell      }

\institute{Jeremiah Birrell \at
             TRIPODS Institute for Theoretical Foundations of Data Science,\\
University of Massachusetts Amherst\\
Amherst, MA, 01003, USA\\              
              \email{birrell@math.umass.edu}           
               }

\date{}

\maketitle

\begin{abstract}
Distributionally robust optimization (DRO) is a widely used framework for optimizing  objective functionals in the presence of both randomness and  model-form uncertainty. A key step in the practical solution of many DRO problems is  a tractable reformulation of the  optimization over the chosen model ambiguity set, which is generally  infinite dimensional.     Previous works have solved this problem in the case where the objective functional is an expected value. In this paper we study objective functionals that are the sum of an expected value and a variance penalty term.  We prove that the corresponding variance-penalized DRO  problem over an $f$-divergence neighborhood can be reformulated as a finite-dimensional convex optimization problem.  This result also provides tight uncertainty quantification bounds on the variance.

\keywords{distributionally robust optimization \and uncertainty quantification \and variance minimization  \and $f$-divergence  }

 \subclass{90C15, 90C46, 94A17  }
\end{abstract}

\section{Introduction}

Optimization problems  that depend on incompletely known parameter values or involve systems with inherently noisy dynamics  are often naturally phrased as stochastic programming (SP) problems of the general form
\begin{flalign}\label{eq:SP}
\text{{\bf (SP)}}&  \hspace{4.2cm} \min_{x\in \mathcal{X}} H[P,x]\,.&
\end{flalign}
The objective  functional, $H[P,x]$, depends on the underlying probability measure, $P$, which models the inherent randomness and/or parameter uncertainty, and on the control variable, $x$.  An important and much studied case of \eqref{eq:SP} is the minimization of an expected value, $H[P,x]=E_P[\rho_x]$,  for some $x$-dependent random variable, $\rho_x$, often thought of as a cost. In this paper we consider objective functionals of the form, $H[P,x]=E_P[\rho_x]+\Var_P[\phi_x]$, consisting of an expected cost and a variance penalty.  Such objective functionals arise in  resource allocation \cite{6483304},   stochastic control \cite{Li2003}, Markov decision processes \cite{10.2307/3689841},   and portfolio optimization  \cite{Yoshimoto,Tutuncu}, where the  variance penalty enforces a certain  risk aversion.

 In practice, the model $P$ is often learned from data.  Fitting a model  requires one to first make  various structural assumptions (e.g., Markovian versus non-Markovian or choosing a particular parametric family). This is a second source of uncertainty in the predictions of a model, beyond its  inherent probabilistic nature, which we term model-form uncertainty. This issue suggest that one generalize \eqref{eq:SP} to the following distributionally robust optimization (DRO) problem
\begin{flalign}\label{eq:DRO}
\text{{\bf (DRO)}}&  \hspace{3.5cm} \min_{x\in \mathcal{X}}\sup_{Q\in\mathcal{U}} H[Q,x]\,,&
\end{flalign}
i.e., minimizing the worst-case `cost' over the neighborhood of models (ambiguity set) $\mathcal{U}$. The ambiguity set  encodes the degree and form of uncertainty regarding the `true' model.   A key step in the practical solution of many DRO problems is a tractable reformulation of    the inner  optimization over $Q$, which is often infinite dimensional.  Finite dimensional reformulations are known when the objective functional is an expected value and for various types  of ambiguity sets, including moment constraints \cite{doi:10.1287/opre.1090.0795,doi:10.1287/opre.1090.0741,doi:10.1287/opre.2014.1314}, Kullback--Leibler  or $f$-divergence neighborhoods \cite{doi:10.1287/opre.1100.0821,Javid2012,Hu2013,10.2307/23359484,2016arXiv160509349L}, and Wasserstein neighborhoods \cite{2016arXiv160402199G,MohajerinEsfahani2018,doi:10.1287/moor.2018.0936}.  Such reformulations are also needed to solve problems with distributionally robust chance constraints  \cite{10.2307/2631503,Hu2013,Jiang2016,doi:10.1137/16M1094725}. An alternative approach to DRO over $f$-divergences neighborhoods  is the use of $f$-divergence penalties \cite{GOTOH2018448,2017arXiv171106565G}; in particular, \cite{GOTOH2018448} discusses the relation of that approach to variance penalties.

Considerably less is known about DRO for  more general objective functionals, beyond  expected values.  In this paper we study the following variance penalized DRO (VP-DRO) problem:
\begin{flalign}\label{eq:varDRO}
\text{{\bf (VP-DRO)}}&  \hspace{2cm} \min_{x\in \mathcal{X}}\sup_{Q\in\mathcal{U}} \{E_Q[\rho_x]+\Var_Q[\phi_x]\}\,.&
\end{flalign}
In    \cite{Tutuncu} VP-DRO was studied for moment constraints, resulting in an inherently  finite dimensional minimax problem; see also \cite{Scutella}. Here we consider $f$-divergence ambiguity sets, $\mathcal{U}_\eta(P)=\{Q:D_f(Q,P)\leq\eta\}$, where  $\eta>0$ and $P$ is a given baseline model.  In this case, the optimization over $Q$ in \eqref{eq:varDRO} is infinite dimensional and presents a considerable challenge on its own.  Our focus for the remainder of the paper will thus  be on the inner maximization problem in \eqref{eq:varDRO}. Specifically, we show it can be rewritten as the following finite dimensional convex optimization problem:
\begin{align}\label{eq:result_preview}
&\sup_{Q:D_f(Q,P)\leq\eta}\{E_Q[\rho]+\Var_Q[\phi]\}\\
=&\inf_{\lambda>0,\beta\in\mathbb{R},\nu\in\mathbb{R}}\left\{{\nu^2}/{4}+\beta+\eta\lambda +\lambda E_P[f^*((\rho+\phi^2-\nu\phi-\beta)/\lambda)]\right\}\,,\notag
\end{align}
where $f^*$ denotes the Legendre transform of $f$. Full details and assumptions can be found in Theorem \ref{thm:UQ_var_preview} below and the proof is given in Section \ref{sec:proof}.

In addition to its importance for DRO, note that \req{eq:result_preview} also  constitutes an uncertainty quantification (UQ) bound over the $f$-divergence model neighborhood, i.e., a bound on
\begin{flalign}\label{eq:UQ}
\text{{\bf (UQ)}}&  \hspace{2.5cm} \sup_{Q:D_f(Q,P)\leq\eta}\{H[Q,x]-H[P,x]\}\,,&
\end{flalign}
where $P$ is a known baseline model (typically one that is tractable, either numerically or analytically)  and $\eta>0$ is chosen so that  the `true' model (which is presumably too complex to work with directly) is contained in the model neighborhood $\mathcal{U}_\eta(P)=\{Q:D_f(Q,P)\leq\eta\}$. UQ bounds for expected values have been heavily studied  \cite{dupuis2011uq,dupuis2015path,BLM,BREUER20131552,glasserman2014,doi:10.1137/19M1237429,atar2015robust,DKPR}.  The formula \eqref{eq:result_preview}  extends  these works to provide a tight UQ bound on the variance.

\subsection{A Formal Argument}

The result \eqref{eq:result_preview} can be motivated by the following formal calculation: First recall that $\Var_Q[\phi]=\inf_{c\in\mathbb{R}}E_Q[(\phi-c)^2]$ and write
\begin{align}\label{eq:var_min}
&\sup_{Q:D_f(Q,P)\leq\eta}\{E_Q[\rho]+\Var_Q[\phi]\}=\sup_{Q:D_f(Q,P)\leq\eta}\inf_{c\in\mathbb{R}}\{E_Q[\rho+(\phi-c)^2]\}\,.
\end{align}
Assuming one can interchange the supremum and the infimum in \eqref{eq:var_min}, one can then use the tight bound on expected values over $f$-divergence neighborhoods from \cite{Javid2012} (see \req{eq:UQ_means} below) to write
\begin{align} 
&\sup_{Q:D_f(Q,P)\leq\eta}\{E_Q[\rho]+\Var_Q[\phi]\}\\
=&\inf_{c\in\mathbb{R},\lambda>0,\beta\in\mathbb{R}}\left\{(\beta+\eta\lambda )+\lambda E_P[f^*((\rho+(\phi-c)^2-\beta)/\lambda)]\right\}\,.\notag
\end{align}
By changing variables $\beta\to \beta+c^2$ and then $c=\nu/2$ one arrives at \req{eq:result_preview}.

The difficulty in making this formal calculation rigorous lies in the exchange of the supremum and the infimum in \eqref{eq:var_min}. Under rather strong assumptions this can be justified  by using, e.g., Sion's minimax theorem  \cite{sion1958,komiya1988} or Ky Fan's minimax theorem \cite{10.2307/88653}. However, the restrictions on $\rho$ and $\phi$ that would be required to satisfy the assumptions of these minmax results would significantly reduce the generality of our main theorem. For instance, both Sion's and  Ky Fan's theorems could be used if one  works on a Polish space and assumes, among other things, that $\rho+(\phi-c)^2$ is bounded above and upper semicontinuous for all $c$   (so as to ensure, via the Portmanteau theorem, that $Q\mapsto E_Q[\rho+(\phi-c)^2]$ is upper semicontinuous  in the Prohorov metric topology). The semicontinuity assumptions are often satisfied in practice but boundedness of the cost function is violated in a large number of applications, hence we do not wish to make these restrictive assumptions. We will take a different approach in the  proof of Theorem \ref{thm:UQ_var_preview}. Our result will follow from the solution of a more general convex optimization problem (see Section \ref{sec:proof}) and will  not rely on the technique from \req{eq:var_min}.

\subsection{Background on $f$-Divergences}\label{sec:background}
Before proceeding to the main theorem, we first provide some required background on $f$-divergences. For $-\infty\leq a<1<b\leq\infty$ we let  $\mathcal{F}_1(a,b)$ denote  the set of convex functions $f:(a,b)\to\mathbb{R}$ with $f(1)=0$. Such functions are continuous and extend to convex, lower semicontinuous functions $f:\mathbb{R}\to(-\infty,\infty]$ by defining $f(a)=\lim_{t\searrow a}f(t)$ and $f(b)=\lim_{t\nearrow b}f(t)$ (when either $a$ and/or $b$ is finite) and $f|_{[a,b]^c}=\infty$ (see Appendix \ref{app:convex_dis} for further details on $\mathcal{F}_1(a,b)$).  Functions $f\in \mathcal{F}_1(a,b)$ are appropriate for defining $f$-divergences as follows \cite{Broniatowski,Nguyen_Full_2010}: Let $\mathcal{P}(\Omega)$ denote the set of probability measures on  a measurable space  $(\Omega,\mathcal{M})$.  For $P,Q\in\mathcal{P}(\Omega)$ and $f\in \mathcal{F}_1(a,b)$ the $f$-divergence of $Q$  with respect to  $P$ is defined by
\begin{align}
D_f(Q,P)=\begin{cases} 
     E_P[f(dQ/dP)], & Q\ll P\\
      \infty, &Q\not\ll P
   \end{cases}\,.
\end{align}
We will also use the following variational characterization of $f$-divergences  \cite{Broniatowski,Nguyen_Full_2010}:
\begin{align}\label{eq:Df_variational}
D_f(Q,P)=&\sup_{\phi\in\mathcal{M}_b(\Omega)}\{E_Q[\phi]-E_P[f^*(\phi)]\}\,,
\end{align}
where $\mathcal{M}_b(\Omega)$ denotes the set of bounded measurable real-valued functions on $\Omega$ and $f^*$ is the Legendre transform of $f$.

\section{Tight Variance Bounds}

The main result  in this paper is the following tight  bound on  an expected value with  variance penalty over an $f$-divergence neighborhood:
\begin{theorem}\label{thm:UQ_var_preview}

Suppose:
\begin{enumerate}[label=\roman*.]
\item   $f\in\mathcal{F}_1(a,b)$ with $a\geq 0$.
\item  $P\in\mathcal{P}(\Omega)$. 
\item $\phi:\Omega\to\mathbb{R}$, $\phi \in L^1(P)$, and there exists $c_+,c_->0$, $\nu_+,\nu_-\in\mathbb{R}$ such that $E_P[[f^*( \pm c_\pm\phi-\nu_\pm)]^+]<\infty$.
\item $\rho:\Omega\to\mathbb{R}$ is measurable and if  $Q\in\mathcal{P}(\Omega)$ with $D_f(Q,P)<\infty$ then  $E_Q[\rho^-]<\infty$.
\end{enumerate}

Then $\phi\in L^1(Q)$ for all $Q\in\mathcal{P}(\Omega)$ that satisfy $D_f(Q,P)<\infty$ and for all  $\eta>0$ we have
\begin{align}\label{eq:UQ_var}
&\sup_{Q:D_f(Q,P)\leq\eta}\{E_Q[\rho]+\Var_Q[\phi]\}\\
=&\inf_{\lambda>0,\beta\in\mathbb{R},\nu\in\mathbb{R}}\left\{{\nu^2}/{4}+\beta+\eta\lambda +\lambda E_P[f^*((\rho+\phi^2-\nu\phi-\beta)/\lambda)]\right\}\,,\notag
\end{align}
where the map $(0,\infty)\times\mathbb{R}\times\mathbb{R}\to(-\infty,\infty]$, 
\begin{align}\label{eq:convex_objective}
(\lambda,\beta,\nu)\mapsto{\nu^2}/{4}+\beta+\eta\lambda +\lambda E_P[f^*((\rho+\phi^2-\nu\phi-\beta)/\lambda)]
\end{align}
is convex.

\end{theorem}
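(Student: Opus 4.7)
The plan is to avoid the minimax swap criticized in the excerpt by instead recasting the left-hand side of (\ref{eq:UQ_var}) as a concave maximization with a single extra real variable. Concretely, introduce the auxiliary decision $m\in\mathbb{R}$ together with the affine coupling $E_Q[\phi]=m$ to rewrite
\begin{align*}
\sup_{Q:D_f(Q,P)\le\eta}\{E_Q[\rho]+\Var_Q[\phi]\}=\sup_{Q,m}\{E_Q[\rho+\phi^2]-m^2\,:\,D_f(Q,P)\le\eta,\ E_Q[\phi]=m\}.
\end{align*}
The new objective is linear in $Q$ plus the concave quadratic $-m^2$, and the feasible set is convex, so the problem fits cleanly into a convex-duality framework. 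The three dual variables $\lambda>0,\beta,\nu$ in the theorem arise, respectively, as Lagrange multipliers for the $f$-divergence constraint $D_f(Q,P)\le\eta$, the normalization $\int dQ=1$, and the new affine equality $E_Q[\phi]=m$.

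The inequality $\le$ in (\ref{eq:UQ_var}) (weak duality) is then direct and follows from two Fenchel--Young bounds. First, $-m^2\le \nu^2/4-\nu m$ (with equality at $\nu=2m$) applied at $m=E_Q[\phi]$ gives $\Var_Q[\phi]\le \nu^2/4-\nu E_Q[\phi]+E_Q[\phi^2]$. Second, $qs\le f(q)+f^*(s)$ pointwise with $q=dQ/dP$ and $s=(\rho+\phi^2-\nu\phi-\beta)/\lambda$, after integrating against $P$ and using $D_f(Q,P)\le\eta$ together with $\int dQ=1$, yields $E_Q[\rho+\phi^2-\nu\phi]\le \beta+\eta\lambda+\lambda E_P[f^*((\rho+\phi^2-\nu\phi-\beta)/\lambda)]$. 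Combining the two produces the claimed upper bound. Hypotheses (iii)--(iv), together with the variational representation (\ref{eq:Df_variational}) of $D_f$, guarantee $\phi\in L^1(Q)$ and the finiteness of the relevant integrals whenever $D_f(Q,P)<\infty$, which both justifies the algebra and confirms the integrability assertion of the theorem.

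The main obstacle is the reverse inequality, i.e.\ strong duality for the augmented primal. Rather than invoking Sion's or Ky Fan's theorem on $(Q,\nu)$, which would force the restrictive continuity/boundedness hypotheses noted after (\ref{eq:var_min}), the plan is to appeal to Fenchel--Rockafellar duality in a suitably paired function-space setting, using the perspective map $(\lambda,u)\mapsto\lambda f^*(u/\lambda)$ as the convex conjugate of $Q\mapsto\lambda D_f(Q,P)$. Hypotheses (iii)--(iv) together with $a\ge 0$ are designed to supply the Slater-type qualification that rules out a duality gap and to keep the dual objective finite on an open set of multipliers. Concretely, one extracts from a near-optimal dual triple $(\lambda^*,\beta^*,\nu^*)$ a near-primal $Q^*$ via the pointwise KKT relation $dQ^*/dP\in\partial f^*((\rho+\phi^2-\nu^*\phi-\beta^*)/\lambda^*)$ together with the complementary-slackness/stationarity identities $D_f(Q^*,P)=\eta$ and $E_{Q^*}[\phi]=\nu^*/2$; the delicate technical points are controlling the boundary behaviour of $f^*$ on $[a,b]$ and verifying that the resulting $Q^*$ belongs to $\mathcal{P}(\Omega)$ (possibly as a limit). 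Finally, the convexity statement for (\ref{eq:convex_objective}) is immediate: the perspective $(\lambda,u)\mapsto\lambda f^*(u/\lambda)$ is jointly convex on $(0,\infty)\times\mathbb{R}$, and the remaining operations---affine precomposition $(\lambda,\beta,\nu,\omega)\mapsto(\lambda,\rho(\omega)+\phi(\omega)^2-\nu\phi(\omega)-\beta)$, integration against $P$, and addition of the convex quadratic $\nu^2/4$ and the linear term $\beta+\eta\lambda$---all preserve convexity.
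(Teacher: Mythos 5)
Your reduction --- introducing $m=E_Q[\phi]$, writing $\Var_Q[\phi]=E_Q[\phi^2]-m^2$, and dualizing the coupling $E_Q[\phi]=m$ --- is exactly the architecture of the paper's proof (your $m$ is the variable $z$ in Lemma \ref{lemma:gen_UQ_temp} and Lemma \ref{lemma:constrained_disintegration}), and your weak-duality direction via the two Fenchel--Young inequalities is correct. The gap is in the reverse inequality, which is the entire content of the theorem and for which you give only a plan; two of its steps fail or are unsupported as stated. First, extracting a primal $Q^*$ from a near-optimal dual triple via the exact KKT relations $dQ^*/dP\in\partial f^*(\Psi^*)$, $D_f(Q^*,P)=\eta$, $E_{Q^*}[\phi]=\nu^*/2$ presupposes attainment and regularity that are not available: the dual infimum need not be attained (the minimizing sequence may have $\lambda\to 0$ or $\lambda\to\infty$), $\partial f^*$ is empty where $f^*=\infty$, and this construction is precisely the ``formal solution'' of Section \ref{sec:formal_sol}, which the paper presents as heuristic and deliberately does not use in its proof. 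Second, an infinite-dimensional Fenchel--Rockafellar argument requires a choice of paired spaces and a qualification condition (e.g.\ continuity of one conjugand at a point of the other's domain); with unbounded $\rho$ and $\phi$ no standard pairing supplies this automatically, and the claim that hypotheses (iii)--(iv) ``are designed to supply the Slater-type qualification'' is an assertion rather than an argument.

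The paper's actual route shows concretely what must be filled in. (a) Strong duality for the disintegration over the level sets $\{Q: E_Q[\phi]=z\}$ is proved in Lemma \ref{lemma:constrained_disintegration}, whose main work lies in the degenerate geometry of the image $\{E_Q[\phi]:D_f(Q,P)\le\eta\}$ (a single point, empty interior, containment in a hyperplane) --- cases your sketch does not address. (b) Within each level set the optimization over $Q$ is not re-derived by function-space duality but handed to the already-known scalar result, Theorem \ref{thm:mean_UQ}, which is how the paper avoids the pairing/qualification issues entirely. (c) The remaining exchange of $\sup_m$ and $\inf_\nu$ --- which your formula needs, since $\sup_m\{\nu m-m^2\}=\nu^2/4$ must end up under the $\inf_\nu$ --- is a genuine minimax step over the non-compact domain $m\in\mathbb{R}$; the paper first confines $m$ to a compact box using the a priori bounds $E_Q[\phi]\in[-M^-_{\eta},M^+_{\eta}]$ and then applies Sion's theorem together with a coercivity argument for $g(z)=z^2$ to replace the box-constrained supremum by $g^*(\nu)$. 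Your proposal folds this swap silently into the Fenchel--Rockafellar step. Until (a)--(c) are supplied, the $\ge$ direction is not established. The convexity claim for \eqref{eq:convex_objective} via the perspective function is correct and matches the paper.
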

\begin{remark}
We use $g^\pm$ to denote the positive and negative parts of a (extended) real-valued function $g$, so that $g^\pm\geq 0$ and $g=g^+-g^-$.  The above assumptions imply $E_P[f^*((\rho+\phi^2-\nu\phi-\beta)/\lambda)]$ exists in $(-\infty,\infty]$ for all $\lambda>0$, $\beta\in\mathbb{R}$, $\nu\in\mathbb{R}$. Assumption (iv) is required to ensure that $E_Q[\rho]+E_Q[\phi^2]\neq -\infty+\infty$.  Often $\rho$ is a non-negative cost function and so this assumption is trivial. Otherwise,  Lemma \ref{lemma:EQ} below (applied to $-\rho$) provides a concrete condition that ensures condition (iv) holds. 
\end{remark}

Before proceeding to the proof of Theorem \ref{thm:UQ_var_preview} we  discuss several consequences of \req{eq:UQ_var}.  First we explore a pair of cases in which the    right hand side of \req{eq:UQ_var} can be simplified analytically. Then we examine the  formal solution of the minimization problem on the right hand side of \eqref{eq:UQ_var}.

\subsection{Tight Variance Bounds: Relative Entropy}
The relative entropy (i.e., Kullback--Leibler divergence) is the $f$ divergence corresponding to $f(t)=t\log(t)$, with Legendre transform $f^*(y)=e^{y-1}$ (we write $R(Q\|P)$ for $D_f(Q,P)$). The integrability assumption on $\phi$ (item iii. in Theorem \ref{thm:UQ_var_preview}) reduces to
\begin{align}
E_P[\exp(\pm c_{\pm}\phi)]<\infty
\end{align}
for some $c_+,c_->0$, i.e., $\phi$ must have a finite moment generating function under $P$ in a neighborhood of $0$. Assuming all of the conditions from Theorem \ref{thm:UQ_var_preview} hold, we can  evaluate the infimum over $\beta$ in \eqref{eq:UQ_var} to find
\begin{align}\label{eq:KL_objective}
&\sup_{Q:R(Q\|P)\leq\eta}\{E_Q[\rho]+\Var_Q[\phi]\}\\
=&\inf_{\lambda>0,\nu\in\mathbb{R}}\left\{{\nu^2}/{4}+\eta\lambda+\lambda \inf_{\beta\in\mathbb{R}}\left\{  \beta/\lambda+ e^{-\beta/\lambda-1}E_P[\exp((\rho+\phi^2-\nu\phi)/\lambda)]\right\}\right\}\notag\\
=&\inf_{\lambda>0,\nu\in\mathbb{R}}\left\{{\nu^2}/{4}+\eta\lambda+\lambda\log\left(E_P\left[e^{(\rho+\phi^2-\nu\phi)/\lambda}\right]\right)\right\}\,,\notag
\end{align}
where the optimum over $\beta$  occurs at $\beta_\lambda=\lambda(\log E_P[\exp((\rho+\phi^2-\nu\phi)/\lambda)]-1)$.

\subsection{Tight Variance Bounds: $\alpha$-Divergences}
The family of $\alpha$-divergences is defined via the convex function $f_\alpha(t)=\frac{t^\alpha-1}{\alpha(\alpha-1)}$, $t>0$, $\alpha>0$, $\alpha\neq 1$. For $\alpha>1$ the Legendre transform is
\begin{align}
f_\alpha^*(y)= y^{\alpha/(\alpha-1)}\alpha^{-1}(\alpha-1)^{\alpha/(\alpha-1)}1_{y>0}+\frac{1}{\alpha(\alpha-1)}\,.
\end{align}
The integrability assumption on $\phi$ (item iii. in Theorem \ref{thm:UQ_var_preview}) reduces to the requirement that
\begin{align}
E_P[ (\pm\phi-\nu_{\pm})^{\alpha/(\alpha-1)}1_{\pm \phi-\nu_{\pm}>0}]<\infty
\end{align}
for some $\nu_\pm\in\mathbb{R}$. In particular, this holds if $\phi$ has a finite $\alpha/(\alpha-1)$'th moment.

  For  $\alpha\in(0,1)$ the Legendre transform is
\begin{align}
f_\alpha^*(y)=\begin{cases} 
|y|^{-\alpha/(1-\alpha)}\alpha^{-1}(1-\alpha)^{-\alpha/(1-\alpha)}-\frac{1}{\alpha(1-\alpha)}\,,&y<0\\
\infty \, , &y\geq 0\,.
   \end{cases}
\end{align}
Here the $\alpha$-divergence has the upper bound $D_{f_\alpha}\leq \frac{1}{\alpha(1-\alpha)}$ and so one should assume $0<\eta<\frac{1}{\alpha(1-\alpha)}$. The integrability assumption on $\phi$ (item iii. in Theorem \ref{thm:UQ_var_preview}) reduces to the requirement that $\phi$  be bounded $P$-a.s. In this case, and assuming the conditions of Theorem \ref{thm:UQ_var_preview} hold, one can further evaluate the infimum over $\lambda$ in \eqref{eq:UQ_var} to find
\begin{align}\label{eq:alpha_div_simp}
&\sup_{Q:D_{f_\alpha}(Q,P)\leq\eta}\{E_Q[\rho]+\Var_Q[\phi]\}\\
=&\inf_{\beta\in\mathbb{R},\nu\in\mathbb{R}}\left\{{\nu^2}/{4}+\beta+\inf_{\lambda>0}\left\{\eta\lambda +\lambda\left( C_{\beta,\nu}\lambda^{\frac{\alpha}{1-\alpha}}-\frac{1}{\alpha(1-\alpha)}\right)\right\}\right\}\notag\\
=&\inf_{\beta\in\mathbb{R},\nu\in\mathbb{R}:C_{\beta,\nu}\neq\infty}\left\{{\nu^2}/{4}+\beta-\alpha\left(\frac{1-\alpha}{C_{\beta,\nu}}\right)^{\frac{1-\alpha}{\alpha}}\left(\frac{1}{\alpha(1-\alpha)}-\eta\right)^{\frac{1}{\alpha}}\right\}\,,\notag
\end{align}
where  $C_{\beta,\nu}\in(0,\infty]$ is given by
\begin{align}
C_{\beta,\nu}\equiv& \begin{cases} 
\frac{E_P[|\rho+\phi^2-\nu\phi-\beta|^{-\frac{\alpha}{1-\alpha}}]}{\alpha(1-\alpha)^{\frac{\alpha}{1-\alpha}}}\,,& P(\rho+\phi^2-\nu\phi-\beta\geq 0)=0\\
\infty\,, &\text{otherwise.}\notag
   \end{cases}
\end{align}

\subsection{Formal Solution of the Convex Minimization Problem}\label{sec:formal_sol}
After applying Theorem \ref{thm:UQ_var_preview}, the infinite-dimensional optimization problem over an $f$-divergence neighborhood is reduced to a finite-dimensional  convex minimization problem with objective function \eqref{eq:convex_objective} or, in special cases,  \eqref{eq:KL_objective} or \eqref{eq:alpha_div_simp}.  These objective functionals are expectations, and so one can apply standard  (stochastic) gradient descent methods to minimize them; we do not address convergence guarantees for such methods here, as that is outside the scope of the present work; see \cite{2018arXiv181008750D} for further discussion of this problem.  In this subsection we gain further  insight into these problems by formally solving the optimization  on the right hand side of \req{eq:UQ_var} (see   Section \ref{sec:proof} for the rigorous derivations): Fix $\eta>0$ and suppose the optimum in \eqref{eq:UQ_var} is achieved at $(\lambda_\eta,\beta_\eta,\nu_\eta)$.  Differentiating with respect to $\beta$  at the optimizer we find
\begin{align}
 E_P[(f^*)^\prime(\Psi_\eta)]=1\,,\,\,\,\,\,\Psi_\eta\equiv(\rho+\phi^2-\nu_\eta\phi-\beta_\eta)/\lambda_\eta\,.
\end{align}
$f^*$ is nondecreasing (see Appendix \ref{app:convex_dis}), hence  $(f^*)^\prime\geq 0$ and $dQ_\eta\equiv (f^*)^\prime(\Psi_\eta)dP$ is a probability measure. Next, by differentiating with respect to $\lambda$ we obtain
\begin{align}
\eta=&E_P[(f^*)^\prime(\Psi_\eta)\Psi_\eta-f^*(\Psi_\eta)]\,.
\end{align}
The function $g\equiv f^*$ is convex, hence $f(x)=g^*(x)=x (g^\prime)^{-1}(x)-g((g^\prime)^{-1}(x))$.  Letting $x=(f^*)^\prime(\Psi_\eta)$ we find
\begin{align}
\eta=&E_P[f((f^*)^\prime(\Psi_\eta)]=D_f(Q_\eta,P)\,.
\end{align}
Differentiating with respect to $\nu$ we obtain
\begin{align}
E_{Q_\eta}[\phi]=\nu_\eta/2.
\end{align}
Putting these together we can compute
\begin{align}
E_{Q_\eta}[\rho]+\Var_{Q_\eta}[\phi]=&\nu_\eta^2/4+\beta_\eta+\lambda_\eta E_{Q_\eta}[\Psi_\eta]\\
=&\nu_\eta^2/4+\beta_\eta+\lambda_\eta(\eta+ E_{P}[f^*(\Psi_\eta)])\,,\notag
\end{align}
which equals the right hand side of \eqref{eq:UQ_var}. Therefore the  probability measure that achieves the optimum on the left hand side of \eqref{eq:UQ_var} is the tilted measure
\begin{align}
dQ_\eta=(f^*)^\prime((\rho+\phi^2-\nu_\eta\phi-\beta_\eta)/\lambda_\eta)dP\,.
\end{align}

\section{Proof of Theorem \ref{thm:UQ_var_preview} }\label{sec:proof}
We now work towards  the proof of Theorem \ref {thm:UQ_var_preview}. We   will require a number of intermediate results, the first being a useful condition that ensures certain expectations exist.  
\begin{lemma}\label{lemma:EQ}
Let   $f\in\mathcal{F}_1(a,b)$ and  $P\in\mathcal{P}(\Omega)$. Suppose $\phi:\Omega\to\mathbb{R}$ is measurable and $E_P[[f ^*( c_0\phi-\nu_0)]^+]<\infty$ for some $\nu_0\in\mathbb{R}$ and $c_0>0$. Then for all $Q\in\mathcal{P}(\Omega)$ with  $D_f(Q,P)<\infty$ we have $E_Q[\phi^+]<\infty$.
\end{lemma}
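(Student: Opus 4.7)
The plan is to reduce the claim to a Fenchel--Young estimate. Since $D_f(Q,P)<\infty$ forces $Q\ll P$, set $p=dQ/dP\in L^1(P)$ and $\psi=c_0\phi-\nu_0$. The elementary pointwise bound
\begin{align*}
c_0\phi^+\le (c_0\phi-\nu_0)^++|\nu_0|=\psi^++|\nu_0|,
\end{align*}
combined with $Q(\Omega)=1$, reduces the lemma to showing $E_Q[\psi^+]<\infty$.

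To control $E_Q[\psi^+]$ I would invoke the Fenchel--Young inequality $\psi p\le f^*(\psi)+f(p)$. On $\{\psi>0\}$ the left-hand side is nonnegative (since $p\ge 0$), so using $(A+B)^+\le A^++B^+$ we obtain
\begin{align*}
\psi p\,1_{\psi>0}\le [f^*(\psi)]^++[f(p)]^+.
\end{align*}
Integrating against $P$ yields $E_Q[\psi^+]=E_P[\psi p\,1_{\psi>0}]\le E_P[[f^*(\psi)]^+]+E_P[[f(p)]^+]$. The first term is finite by the hypothesis $E_P[[f^*(c_0\phi-\nu_0)]^+]<\infty$.

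The one nonroutine point is showing $E_P[[f(p)]^+]<\infty$, since $f(p)$ can take negative values. Here I would use the fact that any $f\in\mathcal{F}_1(a,b)$ is convex with $f(1)=0$ and therefore admits a (finite) subgradient $m$ at $t=1$, giving the supporting-line bound $f(t)\ge m(t-1)$ on $(a,b)$, which extends to $\mathbb{R}$ under the standing convention $f|_{[a,b]^c}=\infty$ (further details are presumably collected in the Appendix \ref{app:convex_dis} referenced in Section \ref{sec:background}). This yields $[f(p)]^-\le |m|\,|p-1|\in L^1(P)$, and combined with $E_P[f(p)]=D_f(Q,P)<\infty$ it forces $E_P[[f(p)]^+]<\infty$. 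Putting everything together gives $E_Q[\phi^+]<\infty$.

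The main obstacle is the need to split positive and negative parts cleanly: Fenchel--Young naturally produces a two-sided estimate, but one only has control on $[f^*(\psi)]^+$ and only implicit control on the positive part of $f(p)$. The supporting-line inequality at $t=1$ is the right tool to convert finiteness of $D_f(Q,P)=E_P[f(p)]$ into finiteness of $E_P[[f(p)]^+]$, after which the remaining steps are essentially bookkeeping.
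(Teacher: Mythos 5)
Your proof is correct, but it takes a genuinely different route from the paper's. The paper argues on the dual side: it fixes a point $b$ with $f^*(b)$ finite, truncates to the bounded function $\phi_n^+=\phi 1_{0\leq\phi<n}+(b+\nu_0)/c_0\,1_{\phi\notin[0,n)}$, plugs $c_0\phi_n^+-\nu_0$ into the variational characterization \eqref{eq:Df_variational} of $D_f$, bounds $E_P[f^*(c_0\phi_n^+-\nu_0)]$ by $E_P[f^*(c_0\phi-\nu_0)^+]+|f^*(b)|$ uniformly in $n$, and passes to the limit with Fatou's lemma. You instead work on the primal side, applying the Fenchel--Young inequality pointwise to the density $p=dQ/dP$ and controlling $E_P[[f(p)]^+]$ via the supporting line of $f$ at $t=1$ (which exists and is finite because $1\in(a,b)$ is interior to the domain of $f$) together with $E_P[|p-1|]\leq 2$. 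Both arguments are sound; the only detail you leave implicit --- that $f(p)<\infty$ $P$-a.s.\ and that $E_P[f(p)]<\infty$ combined with $E_P[[f(p)]^-]<\infty$ forces $E_P[[f(p)]^+]<\infty$ --- is exactly what the supporting-line bound delivers, so there is no gap. Your version is arguably more direct: no truncation, no Fatou, no need to select a point where $f^*$ is finite, and it makes transparent where the hypothesis $E_P[[f^*(c_0\phi-\nu_0)]^+]<\infty$ enters. The paper's version has the advantage of never touching the density or the sign decomposition of $f(p)$, relying only on the variational formula for bounded test functions, which is the same machinery used throughout the rest of the paper.
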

\begin{proof}
Fix $b\in\mathbb{R}$ for which $f^*(b)$ is finite and define  
\begin{align}
\phi^+_n=\phi1_{0\leq \phi<n}+(b+\nu_0)/c_0 1_{\phi\not\in[0,n)}\,.
\end{align}
  Hence $c_0\phi_n^+-\nu_0 \in\mathcal{M}_b(\Omega)$ and the variational formula \eqref{eq:Df_variational} gives
\begin{align}
D_f(Q,P)\geq E_Q[c_0\phi_n^+-\nu_0]-E_P[f^*(c_0\phi_n^+-\nu_0)]\,,
\end{align}
where  $E_P[f^*(c_0\phi_n^+-\nu_0)]$ is defined in $(-\infty,\infty]$.  Hence
\begin{align}\label{eq:EQ_lemma1}
 E_Q[c_0\phi_n^+]-D_f(Q,P)\leq \nu_0+E_P[f^*(c_0\phi_n^+-\nu_0)].
\end{align}
We can bound
\begin{align}
f^*(c_0\phi_n^+-\nu_0)=&f^*(c_0\phi-\nu_0)1_{0\leq \phi<n}+f^*(b)1_{\phi\not\in[0,n)}\\
\leq& f^*(c_0\phi-\nu_0)^++|f^*(b)|\,,\notag
\end{align}
and so
\begin{align}
E_P[f^*(c_0\phi_n^+-\nu_0)]\leq E_P[f^*(c_0\phi-\nu_0)^+]+|f^*(b)|\,.
\end{align}
Combined with \req{eq:EQ_lemma1}, this implies
\begin{align}
 E_Q[c_0\phi_n^+]-D_f(Q,P)\leq \nu_0+ E_P[f^*(c_0\phi-\nu_0)^+]+|f^*(b)|<\infty
\end{align}
for all $n$. $\phi_n^+$ are uniformly bounded below, therefore  Fatou's Lemma implies
\begin{align}
E_Q[\liminf_n \phi_n^+]\leq& \liminf_n E_Q[\phi_n^+]\\
\leq& c_0^{-1}(\nu_0+ E_P[f^*(c_0\phi-\nu_0)^+]+|f^*(b)|+D_f(Q,P))\,.\notag
\end{align}
We have the pointwise limit $\phi_n^+\to \phi^++(b+\nu_0)/c_01_{\phi<0}$, hence
\begin{align}
&E_Q[\phi^+]+(b+\nu_0)/c_0Q(\phi<0)=E_Q[\liminf_n \phi_n^+]\\
\leq& c_0^{-1}(\nu_0+ E_P[f^*(c_0\phi-\nu_0)^+]+|f^*(b)|+D_f(Q,P))<\infty\,.\notag
\end{align}
This prove the claim.

\end{proof}

The following lemma is an intermediate step towards \req{eq:UQ_var}. It show how to express certain suprema over  $f$-divergence neighborhoods in terms of  finite dimensional maximin problems.
\begin{lemma}\label{lemma:gen_UQ_temp}
Suppose:
\begin{enumerate}[label=\roman*.]
\item   $f\in\mathcal{F}_1(a,b)$ with $a\geq 0$.
\item  $P\in\mathcal{P}(\Omega)$. 
\item $\phi_i:\Omega\to\mathbb{R}$, $i=1,...,k$, $\phi_i \in L^1(P)$, and there exists $c_i^+,c_i^->0$, $\nu_i^+,\nu_i^-\in\mathbb{R}$ such that  $E_P[[f^*( \pm c_i^\pm\phi_i-\nu_i^\pm)]^+]<\infty$ for $i=1,...,k$.
\item $\psi:\Omega\to\mathbb{R}$ is measurable and $\psi^-\in L^1(P)$.
\item $g:\mathbb{R}^k\to\mathbb{R}$ is convex.
\end{enumerate}

Then
\begin{enumerate}
\item $\phi_i\in L^1(Q)$, $i=1,...,k$,  for all $Q\in\mathcal{P}(\Omega)$  that satisfy $D_f(Q,P)<\infty$.
\item For all $\eta>0$ there exists $M^\pm_{i,\eta}\in\mathbb{R}$, $i=1,...,k$ such that 
\begin{align}\label{eq:EQ_phi_bound}
E_Q[\phi_i]\in[-M_{i,\eta}^-,M_{i,\eta}^+]
\end{align}
for all $i=1,...,k$ and all  $Q\in\mathcal{P}(\Omega)$  that satisfy $D_f(Q,P)\leq \eta$.
\item For all $\eta>0$ and all $C\subset \mathbb{R}^k$ with $\prod_{i=1}^k[-M_{i,\eta}^-,M_{i,\eta}^+]\subset C$  we have
\begin{align}\label{eq:gen_UQ_temp}
&\sup_{Q:D_f(Q,P)\leq\eta}\{E_Q[\psi]-g(E_Q[\phi_1],...,E_Q[\phi_k])\}\\
=&\sup_{z\in C}\inf_{\nu\in\mathbb{R}^k}\{\nu\cdot z-g(z)+\!\!\inf_{\lambda>0,\beta\in\mathbb{R}}\!\left\{\beta+\eta\lambda +\lambda E_P[f^*((\psi-\nu\cdot\phi-\beta)/\lambda)]\right\}\}\,,\notag
\end{align}
and  $E_P[f^*((\psi-\nu\cdot\phi-\beta)/\lambda)]$ exists in $(-\infty,\infty]$ for all $\lambda>0$, $\beta\in\mathbb{R}$, $\nu\in\mathbb{R}^k$.
\end{enumerate}
\end{lemma}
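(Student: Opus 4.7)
The plan is to prove parts (1) and (2) as immediate corollaries of Lemma \ref{lemma:EQ}, and to establish the main equality in part (3) through a two-stage reduction: an application of the standard $f$-divergence expectation duality followed by a Sion-type minimax exchange on a compact box of parameters.

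For (1) and (2), I apply Lemma \ref{lemma:EQ} once to $\phi_i$ with the constants $(c_i^+,\nu_i^+)$ and once to $-\phi_i$ with $(c_i^-,\nu_i^-)$. This yields $\phi_i^\pm\in L^1(Q)$, and hence $\phi_i\in L^1(Q)$, whenever $D_f(Q,P)<\infty$. The quantitative bound produced in the proof of Lemma \ref{lemma:EQ} depends affinely on $D_f(Q,P)$, so restricting to the sublevel set $\{D_f(Q,P)\leq\eta\}$ yields the uniform bounds $E_Q[\phi_i]\in[-M_{i,\eta}^-,M_{i,\eta}^+]$ required in (2).

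For part (3), I first check that $E_P[f^*((\psi-\nu\cdot\phi-\beta)/\lambda)]$ is well-defined in $(-\infty,\infty]$: the convex function $f^*$ satisfies the affine lower bound $f^*(t)\geq t$ (since $f(1)=0$), so $[f^*((\psi-\nu\cdot\phi-\beta)/\lambda)]^-\leq [(\psi-\nu\cdot\phi-\beta)/\lambda]^-$, and hypotheses (iii)--(iv) supply $\phi_i,\psi^-\in L^1(P)$. Next I simplify the inner $\inf_{\lambda,\beta}$ by invoking the established $f$-divergence expectation duality
\[
\inf_{\lambda>0,\beta\in\mathbb{R}}\{\beta+\eta\lambda+\lambda E_P[f^*((h-\beta)/\lambda)]\}=\sup_{Q:D_f(Q,P)\leq\eta}E_Q[h]=:S(h),
\]
specialized to $h=\psi-\nu\cdot\phi$ (whose hypotheses are supplied by claim (1) and $\psi^-\in L^1(P)$). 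The identity to prove then reduces to
\[
\sup_{Q:D_f(Q,P)\leq\eta}\{E_Q[\psi]-g(E_Q[\phi])\}=\sup_{z\in C}\inf_{\nu\in\mathbb{R}^k}\{\nu\cdot z-g(z)+S(\psi-\nu\cdot\phi)\}.
\]

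I then prove the two directions separately. The direction LHS $\leq$ RHS is elementary: for each feasible $Q$, the vector $z_Q=E_Q[\phi]$ lies in $K:=\prod_i[-M_{i,\eta}^-,M_{i,\eta}^+]\subset C$ by (2), and for every $\nu$ one has $\nu\cdot z_Q-g(z_Q)+S(\psi-\nu\cdot\phi)\geq E_Q[\psi]-g(E_Q[\phi])$, so taking $\inf_\nu$ and then $\sup_Q$ yields the bound. The reverse direction is the principal obstacle, as it amounts to strong duality for the $(z,\nu)$ saddle problem. The integrand $\Phi(z,\nu):=\nu\cdot z-g(z)+S(\psi-\nu\cdot\phi)$ is concave and continuous in $z$ (since finite convex $g$ is continuous on $\mathbb{R}^k$) and convex and lower semicontinuous in $\nu$ (as $S(\psi-\nu\cdot\phi)$ is a supremum of affine functions of $\nu$, plus the linear term $\nu\cdot z$). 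Restricting $z$ to the compact box $K$ brings the problem within the scope of Sion's minimax theorem, giving $\sup_{z\in K}\inf_\nu\Phi=\inf_\nu\sup_{z\in K}\Phi=\inf_\nu\{R_K(\nu)+S(\psi-\nu\cdot\phi)\}$ with $R_K(\nu):=\sup_{z\in K}\{\nu\cdot z-g(z)\}\leq g^*(\nu)$. To conclude, I derive the companion identity $\text{LHS}=\inf_\nu\{g^*(\nu)+S(\psi-\nu\cdot\phi)\}$ by first rewriting LHS as $\sup_{z\in K}\{T(z)-g(z)\}$ for the concave value function $T(z):=\sup\{E_Q[\psi]:E_Q[\phi]=z,\,D_f(Q,P)\leq\eta\}$ (supported in $K$ by (2)), applying Fenchel duality $-g(z)=\inf_\nu\{g^*(\nu)-\nu\cdot z\}$, and exchanging $\sup_{z\in K}$ with $\inf_\nu$ via a second application of Sion on the same compact box. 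The chain $\text{RHS}=\inf_\nu\{R_K(\nu)+S(\psi-\nu\cdot\phi)\}\leq\inf_\nu\{g^*(\nu)+S(\psi-\nu\cdot\phi)\}=\text{LHS}$ then closes the argument; the hardest step, and the one demanding the most care, is the second minimax exchange, where the reduction to the finite-dimensional compact box $K$ is what makes Sion applicable despite the $Q$-space itself not being compact in any convenient topology.
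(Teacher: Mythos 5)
Your parts (1) and (2) are fine and close to the paper's treatment (the paper extracts the uniform bounds $M^{\pm}_{i,\eta}$ directly from Theorem \ref{thm:mean_UQ} applied to $\pm c_i^{\pm}\phi_i$, while you extract them from the estimate inside the proof of Lemma \ref{lemma:EQ}; both work). For part (3) you take a genuinely different route: the paper recasts the left-hand side as $\inf_{Q\in X}F(Q,H(Q))$ and invokes the bespoke Lemma \ref{lemma:constrained_disintegration} (Lagrangian duality over the level sets of the linear map $H$ under a Slater-type condition), whereas you propose weak duality plus two applications of Sion's theorem on the compact box $K=\prod_i[-M^-_{i,\eta},M^+_{i,\eta}]$ combined with Fenchel biconjugation of $g$. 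This would avoid the infinite-dimensional Lagrangian machinery, but as written it has two genuine gaps.

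First, your closing chain only treats $C=K$. The lemma asserts the identity for \emph{every} $C\supset K$, and since $\sup_{z\in C}\inf_\nu\Phi\geq\sup_{z\in K}\inf_\nu\Phi$, proving $\sup_{z\in K}\inf_\nu\Phi=\mathrm{LHS}$ gives no upper bound on the right-hand side when $C$ is strictly larger; the asserted equality $\mathrm{RHS}=\inf_\nu\{R_K(\nu)+S(\psi-\nu\cdot\phi)\}$ is exactly what is unproved. You must additionally show $\inf_\nu\Phi(z,\nu)\leq\mathrm{LHS}$ for $z\in C$ outside $\overline{h(X)}$, where $h(X)=\{E_Q[\phi]:D_f(Q,P)\leq\eta\}$; this requires a separating-hyperplane argument (take $\nu=-tv$ with $v$ separating $z$ from $h(X)$ and let $t\to\infty$, using that $\sup_Q E_Q[\psi]<\infty$ in the nontrivial case). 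This is precisely the step the paper performs when showing $F(z)=\infty$ on $\overline{h(X)}^c\cap C$ inside Lemma \ref{lemma:constrained_disintegration}; it is a missing idea, not a formality.

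Second, the hypotheses of Sion's theorem are not verified, and in your second application they fail as stated: the value function $T(z)$ is concave but need not be upper semicontinuous on the boundary of $h(X)$, it equals $-\infty$ on $K\setminus h(X)$, and for merely convex $g$ (no superlinearity is assumed in this lemma) the conjugate $g^*$ takes the value $+\infty$, so the integrand is not real-valued. You would need either to pass to the upper closure of $T$ (checking this does not change $\sup_z\{T(z)-g(z)\}$, which uses continuity of the finite convex $g$), or, more cleanly, to replace the second exchange by Fenchel--Rockafellar duality, whose qualification $\ri(\dom T)\cap\ri(\dom g)\neq\emptyset$ holds since $\dom g=\mathbb{R}^k$. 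In the first exchange you should also record that $S(\psi-\nu\cdot\phi)$ is either finite for all $\nu$ or $+\infty$ for all $\nu$ (because the $E_Q[\phi_i]$ are uniformly bounded on the sublevel set by part (2)), the latter case being disposed of separately as the paper does. With these repairs your argument can be made to work, but they are not cosmetic.
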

\begin{remark}
In \req{eq:gen_UQ_temp}, and in the following, we define $\infty-\infty\equiv\infty$ so that expectations are defined for all measurable functions; such a term is only possible in  \req{eq:gen_UQ_temp} on the left hand side and, under appropriate assumptions, can be ruled out entirely via Lemma \ref{lemma:EQ}.
\end{remark}
\begin{proof}
\begin{enumerate}
\item This is a direct consequence of Lemma \ref{lemma:EQ}.
\item 
Let $\eta>0$.  The tight bound on expected values over $f$-divergence neighborhoods from  \cite{Javid2012,duchi2018learning}, which we recall in Theorem \ref{thm:mean_UQ} in Appendix \ref{thm:mean_UQ}, implies
\begin{align}
E_Q[\pm\phi_i]\leq \frac{\eta}{c_i^\pm}+\frac{1}{c_i^\pm}(\nu_i^\pm+E_P[f^*(\pm c_i^\pm\phi_i-\nu_i^\pm)])\equiv M_{i,\eta}^\pm
\end{align}
for all  $Q\in\mathcal{P}(\Omega)$ with $D_f(Q,P)\leq\eta$. Assumption (iii) implies $M^\pm_{i,\eta}<\infty$.  The lower bound $f^*(y)\geq y$ (see Appendix \ref{app:convex_dis}) together with $\phi_i\in L^1(P)$ then implies $M^\pm_{i,\eta}\in\mathbb{R}$.

\item Here we will use Lemma \ref{lemma:constrained_disintegration} from Appendix \ref{app:convex_dis}  to rewrite the left hand side of \req{eq:gen_UQ_temp} as an optimization of  expected values over an $f$-divergence neighborhood and then employ Theorem \ref{thm:mean_UQ} from Appendix \ref{app:UQ_means}.  We start by fixing $\eta>0$, letting $M_{i,\eta}^\pm$ be as in part (2) and letting  $C\subset \mathbb{R}^k$ with $\prod_{i=1}^k[-M_{i,\eta}^-,M_{i,\eta}^+]\subset C$.   Using $f^*(y)\geq y$, for  $\lambda>0$, $\beta\in\mathbb{R}$, $\nu\in\mathbb{R}^k$ we find
\begin{align}
f^*((\psi-\nu\cdot\phi-\beta)/\lambda)^-\leq\frac{1}{\lambda}(\psi^-+(\nu\cdot\phi+\beta)^+)\in L^1(P).
\end{align}
Hence $E_P[f^*((\psi-\nu\cdot\phi-\beta)/\lambda)]$ exists in $(-\infty,\infty]$. The following formula will be key during the remainder of the derivation: For any $\nu\in\mathbb{R}^k$ we have $(\psi-\nu\cdot\phi)^-\in L^1(P)$ and so Theorem \ref{thm:mean_UQ} implies
\begin{align}\label{eq:EQ_nu}
&\sup_{Q:D_f(Q,P)\leq\eta}E_Q[\psi-\nu\cdot\phi]\\
=&\inf_{\lambda>0,\beta\in\mathbb{R}}\{\beta+\eta\lambda+\lambda E_P[f^*((\psi-\nu\cdot\phi-\beta)/\lambda)]\}\,,\notag
\end{align}
where $\infty-\infty\equiv\infty$.

 To prove the claimed equality \eqref{eq:gen_UQ_temp} we consider two cases. First, suppose there exists $\nu_0\in\mathbb{R}^k$ such that $E_P[f^*((\psi-\nu_0\cdot\phi-\beta)/\lambda)^+]=\infty$ for all $\beta\in\mathbb{R}$, $\lambda>0$:  Taking  $\nu=\nu_0$ in \req{eq:EQ_nu} we see that
\begin{align}
\sup_{Q:D_f(Q,P)\leq\eta}E_Q[\psi-\nu_0\cdot\phi]= \infty\,.
\end{align}
For any other $\nu$ we have 
\begin{align}
E_Q[(\nu-\nu_0)\cdot\phi]\geq -\sum_i|(\nu-\nu_0)^i|\max\{M_{i,\eta}^-,M_{i,\eta}^+\}\equiv -C
\end{align}
and hence
\begin{align}\label{eq:case1_2}
\sup_{Q:D_f(Q,P)\leq\eta}E_Q[\psi-\nu\cdot\phi]=\infty
\end{align}
for all $\nu\in\mathbb{R}^k$. \req{eq:EQ_nu} then implies
\begin{align}
\inf_{\lambda>0,\beta\in\mathbb{R}}\{\beta+\eta\lambda+\lambda E_P[f^*((\psi-\nu\cdot\phi-\beta)/\lambda)]\}=\infty
\end{align}
for all $\nu\in\mathbb{R}^k$ and so we see that the right hand side of \req{eq:gen_UQ_temp} equals $+\infty$.

\req{eq:EQ_phi_bound} together with continuity of $g$ imply that there exists $D\in\mathbb{R}$ such that $g(E_Q[\phi])\leq D$ for all $Q\in\mathcal{P}(\Omega)$ with $D_f(Q,P)\leq\eta$. Therefore
\begin{align}
&\sup_{Q:D_f(Q,P)\leq\eta}\{E_Q[\psi]-g(E_Q[\phi])\}\geq -D+\sup_{Q:D_f(Q,P)\leq\eta}E_Q[\psi]\,.
\end{align}
\req{eq:case1_2} for $\nu=0$ then implies that the left hand side of  \req{eq:gen_UQ_temp} also equals $+\infty$.

Now consider the alternative case, where for all $\nu\in\mathbb{R}^k$ there exists $\beta\in\mathbb{R}$, $\lambda>0$ such that $E_P[f^*((\psi-\nu\cdot\phi-\beta)/\lambda)^+]<\infty$.  We will show that the optimization problem $\inf_{Q\in {X}} F(Q,H(Q))$ can be written as an iterated optimization over level sets of $H$.  To do this we will use the convex optimization result given in Lemma \ref{lemma:constrained_disintegration} of Appendix \ref{app:convex_dis}.  This result relies  on a variant of the Slater conditions, which we now verify:  Let $V$ be the  vector space consisting of all finite, real linear combinations of measures in $\{Q\in\mathcal{P}(\Omega): D_f(Q,P)<\infty\}$ and let
\begin{align}
X=\{Q:D_f(Q,P)\leq\eta,\, E_Q[\psi^-]<\infty\}.
\end{align}
Then $P\in X\subset V$, $X$ is convex, and $\psi\in L^1(Q)$ for all $Q\in X$ (Theorem \ref{thm:mean_UQ} implies that $E_Q[\psi^+]<\infty$ for all $Q$ with $D_f(Q,P)\leq\eta$).  Define the convex function $F:X\times\mathbb{R}^k\to\mathbb{R}$ by $F(Q,z)=g(z)-E_Q[\psi]$ and the linear function $H:V\to\mathbb{R}^k$, $H(\mu)=(\int\phi_1d\mu,...,\int \phi_k d\mu)$.  For all $\nu\in\mathbb{R}^k$, $z\in\mathbb{R}^k$ we can use \req{eq:EQ_nu} to compute
\begin{align}\label{eq:Slater_functional}
&\inf_{Q\in X}\{F(Q,z)+\nu\cdot(H(Q)-z)\}\\
=&g(z)-\nu\cdot z-\sup_{Q:D_f(Q,P)\leq\eta}\{E_Q[\psi-\nu\cdot\phi]\}\notag\\
=&g(z)-\nu\cdot z-\inf_{\lambda>0,\beta\in\mathbb{R}}\{\beta+\eta\lambda+\lambda E_P[f^*((\psi-\nu\cdot\phi-\beta)/\lambda)]\}\,.\notag
\end{align}
In the case currently under consideration, there exists $\lambda>0$, $\beta$ such that  $E_P[f^*((\psi-\nu\cdot\phi-\beta)/\lambda)^+]<\infty$ and so $\inf_{Q\in X}\{F(Q,z)+\nu\cdot(H(Q)-z)\}>-\infty$. 

With this we have  shown that all of the hypotheses of Lemma \ref{lemma:constrained_disintegration} from  Appendix \ref{app:convex_dis} hold and hence  we obtain the following: For all $K\subset \mathbb{R}^k$ with $H(X)\subset K$ we have
 \begin{align}
&\inf_{Q\in {X}} F(Q,H(Q))=\inf_{z\in K}\sup_{\nu\in\mathbb{R}^k}\inf_{Q\in X}\{F(Q,z)+\nu \cdot(H(Q)-z)\}\,,
\end{align}
i.e.,   
 \begin{align}
&\sup_{Q:D_f(Q,P)\leq\eta}\{E_Q[\psi]-g(E_Q[\phi_1],...,E_Q[ \phi_k])\}\\
=&\sup_{z\in K}\inf_{\nu\in\mathbb{R}^k}\{\nu\cdot z-g(z)+\!\!\inf_{\lambda>0,\beta\in\mathbb{R}}\!\{\beta+\eta\lambda+\lambda E_P[f^*((\psi-\nu\cdot\phi-\beta)/\lambda)])\}\}\,.\notag
\end{align}
From part (2) we see that $H(X)\subset\prod_{i=1}^k[-M_{i,\eta}^-,M_{i,\eta}^+]\subset C$ and so we can take $K=C$, thus completing  the proof.

\end{enumerate}
\end{proof}

Given further assumptions on $g$ we can exchange the order of the minimization and maximization in  \req{eq:gen_UQ_temp} and  evaluate the supremum over $z$, thereby expressing the result as a finite dimensional convex minimization problem.
\begin{theorem}\label{thm:gen_UQ}
Suppose:
\begin{enumerate}[label=\roman*.]
\item   $f\in\mathcal{F}_1(a,b)$ with $a\geq 0$.
\item  $P\in\mathcal{P}(\Omega)$.
\item $\phi_i:\Omega\to\mathbb{R}$, $i=1,...,k$, $\phi_i \in L^1(P)$, and there exists $c_i^+,c_i^->0$, $\nu_i^+,\nu_i^-\in\mathbb{R}$ such that  $E_P[[f^*( \pm c_i^\pm\phi_i-\nu_i^\pm)]^+]<\infty$ for $i=1,...,k$.
\item $\psi:\Omega\to\mathbb{R}$ is measurable and $\psi^-\in L^1(P)$.
\item $g:\mathbb{R}^k\to\mathbb{R}$ is convex, $C^1$,  and
\begin{align}
\lim_{z\to\infty}g(z)/\|z\|=\lim_{z\to\infty}\|\nabla g(z)\|=\lim_{\nu\to\infty}g^*(\nu)/\|\nu\|=\infty\,.
\end{align}
\end{enumerate}

Let   $\eta>0$. Then
\begin{enumerate}
\item 
\begin{align}\label{eq:gen_UQ_final}
&\sup_{Q:D_f(Q,P)\leq\eta}\{E_Q[\psi]-g(E_Q[\phi_1],...,E_Q[\phi_k])\}\\
=&\inf_{\lambda>0,\beta\in\mathbb{R},\nu\in\mathbb{R}^k}\left\{g^*(\nu)+\beta+\eta\lambda +\lambda E_P[f^*((\psi-\nu\cdot\phi-\beta)/\lambda)]\right\}\,.\notag
\end{align}
\item The map $(0,\infty)\times\mathbb{R}\times\mathbb{R}^k\to(-\infty,\infty]$, 
\begin{align}\label{eq:obj_fun_gen}
(\lambda,\beta,\nu)\mapsto g^*(\nu)+\beta+\eta\lambda +\lambda E_P[f^*((\psi-\nu\cdot\phi-\beta)/\lambda)]
\end{align}
is convex.

\end{enumerate}
\end{theorem}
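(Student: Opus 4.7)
The plan is to combine the Fenchel--Young inequality, the maximin identity from Lemma \ref{lemma:gen_UQ_temp}, and convex duality to exhibit an explicit primal--dual pair $(z^*,\nu^*)$, bypassing any appeal to a general minimax theorem. For the easy direction LHS $\le$ RHS, take any $Q$ with $D_f(Q,P)\le\eta$ and any admissible $(\lambda,\beta,\nu)$. The variational formula \eqref{eq:Df_variational} yields
\[E_Q[\psi-\nu\cdot\phi]\le \beta+\eta\lambda+\lambda E_P[f^*((\psi-\nu\cdot\phi-\beta)/\lambda)],\]
and Fenchel--Young gives $\nu\cdot E_Q[\phi]-g(E_Q[\phi])\le g^*(\nu)$; adding these, taking $\sup_Q$ on the left and $\inf_{\lambda,\beta,\nu}$ on the right yields LHS $\le$ RHS.

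For the reverse inequality, apply Lemma \ref{lemma:gen_UQ_temp} with $C=\mathbb{R}^k$ and set $\Phi(z,\nu):=\nu\cdot z-g(z)+K(\nu)$, where $K(\nu):=\inf_{\lambda>0,\beta\in\mathbb{R}}\{\beta+\eta\lambda+\lambda E_P[f^*((\psi-\nu\cdot\phi-\beta)/\lambda)]\}$, so LHS $=\sup_{z\in\mathbb{R}^k}\inf_\nu\Phi(z,\nu)$. The identity $\sup_z\{\nu\cdot z-g(z)\}=g^*(\nu)$ rewrites the right-hand side of \eqref{eq:gen_UQ_final} as $\inf_\nu h(\nu)$ with $h(\nu):=g^*(\nu)+K(\nu)$. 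In the ``trivial case'' of Lemma \ref{lemma:gen_UQ_temp} (some $\nu_0$ makes $E_P[f^*(\cdot)^+]=\infty$ for all $\lambda,\beta$), Lemma \ref{lemma:gen_UQ_temp} gives LHS $=+\infty$ and the easy direction then forces RHS $=+\infty$. Otherwise $K:\mathbb{R}^k\to\mathbb{R}$ is finite-valued; writing $K(\nu)=\sup_Q E_Q[\psi-\nu\cdot\phi]$ shows $K$ is convex and lsc, and evaluating at $Q=P$ bounds $K$ below by the affine function $\nu\mapsto E_P[\psi]-\nu\cdot E_P[\phi]$ (using $\phi\in L^1(P)$ and $\psi^-\in L^1(P)$). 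Together with superlinearity of $g^*$ this makes $h$ convex, continuous, and coercive on $\mathbb{R}^k$, so $\inf h$ is attained at some $\nu^*$.

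It remains to produce $z^*\in\mathbb{R}^k$ with $\inf_\nu\Phi(z^*,\nu)=h(\nu^*)$. Superlinearity of $g$ ensures that $\sup_z\{\nu^*\cdot z-g(z)\}=g^*(\nu^*)$ is attained at some $z\in\mathbb{R}^k$; at any such maximiser $\nabla g(z)=\nu^*$, equivalently $z\in\partial g^*(\nu^*)$. By the Moreau--Rockafellar sum rule (applicable since $\operatorname{dom}g^*=\operatorname{dom}K=\mathbb{R}^k$), $0\in\partial h(\nu^*)=\partial g^*(\nu^*)+\partial K(\nu^*)$, so we can choose $z^*\in\partial g^*(\nu^*)$ satisfying $-z^*\in\partial K(\nu^*)$. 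The Fenchel equality $K(\nu^*)+K^*(-z^*)=-\nu^*\cdot z^*$ then gives
\[\inf_\nu\Phi(z^*,\nu)=-g(z^*)-K^*(-z^*)=-g(z^*)+\nu^*\cdot z^*+K(\nu^*)=g^*(\nu^*)+K(\nu^*)=h(\nu^*),\]
so LHS $\ge h(\nu^*)=$ RHS.

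Convexity of \eqref{eq:obj_fun_gen} is a standard perspective-function argument: $g^*$ is convex in $\nu$, $\beta+\eta\lambda$ is affine, and since $f^*$ is convex the perspective $(\lambda,y)\mapsto \lambda f^*(y/\lambda)$ is jointly convex on $(0,\infty)\times\mathbb{R}$; precomposing with the pointwise affine map $(\lambda,\beta,\nu)\mapsto(\lambda,\psi-\nu\cdot\phi-\beta)$ and integrating against $P$ preserves convexity. The main obstacle is the hard direction: a direct Sion's-theorem swap on a compact box $C$ fails, because restricting $z\in C$ forces $\sup_{z\in C}\Phi(\cdot,\nu)$ to grow only linearly in $\|\nu\|$ while $g^*(\nu)+K(\nu)$ grows superlinearly, so the minimax identity on $C$ cannot match $\inf_\nu\{g^*(\nu)+K(\nu)\}$. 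The growth hypotheses in (v) are precisely what make the explicit saddle construction $(z^*,\nu^*)$ above available and close the gap.
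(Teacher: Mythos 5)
Your proof is correct, and the hard direction takes a genuinely different route from the paper's. Both arguments start from Lemma \ref{lemma:gen_UQ_temp} and both reduce to showing $\sup_z\inf_\nu\{\nu\cdot z-g(z)+K(\nu)\}=\inf_\nu\{g^*(\nu)+K(\nu)\}$ (your $K$ is the paper's $h$ from \eqref{eq:h_def}), but the paper proves this by applying Sion's minimax theorem with $z$ ranging over a compact box $[-R,R]^k$ and then running a two-case argument --- driven by $\lim_{z\to\infty}\|\nabla g(z)\|=\infty$ and an affine lower bound on $h$ --- to show that for $R$ large enough the truncated conjugate $\sup_{z\in[-R,R]^k}\{\nu\cdot z-g(z)\}$ can be replaced by $g^*(\nu)$ inside the infimum. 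You instead exhibit an explicit near-saddle point: coercivity of $g^*+K$ (from $\lim_{\nu\to\infty}g^*(\nu)/\|\nu\|=\infty$ together with the affine lower bound $K(\nu)\ge E_P[\psi]-\nu\cdot E_P[\phi]$) gives a minimizer $\nu^*$, the Moreau--Rockafellar sum rule (valid since both summands are finite-valued, hence continuous, on all of $\mathbb{R}^k$) splits $0\in\partial g^*(\nu^*)+\partial K(\nu^*)$, and the two Fenchel equalities yield $\inf_\nu\Phi(z^*,\nu)=g^*(\nu^*)+K(\nu^*)$. The trivial case, the easy direction (Fenchel--Young plus the $\le$ half of Theorem \ref{thm:mean_UQ}), and the convexity claim are all handled correctly. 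Your route is arguably cleaner --- it avoids the minimax theorem and the delicate choice of the constants $R$, $\widetilde R$, $\widetilde C$ --- at the price of leaning on the superlinearity of $g^*$ for attainment of the infimum, where the paper's truncation argument leans on $\lim_{z\to\infty}\|\nabla g(z)\|=\infty$ instead; both are hypotheses of the theorem, so either is legitimate.

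One aside in your writeup is wrong, though it does not affect your argument: you claim a Sion swap over a compact box ``cannot match'' $\inf_\nu\{g^*(\nu)+K(\nu)\}$ because the truncated conjugate grows only linearly in $\|\nu\|$. This is exactly what the paper does, and it does close the gap: since $K$ admits an affine lower bound of slope $-D$ while the truncated conjugate grows with slope at least $\widetilde R>D$ along the relevant directions, the infimum of the truncated objective over the region where the truncation bites is still forced above $\inf_\nu\{g^*(\nu)+K(\nu)\}$ once $R$ is chosen large enough using $\lim_{z\to\infty}\|\nabla g(z)\|=\infty$; on the complementary region the truncated conjugate already equals $g^*(\nu)$. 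So linear growth of the truncated conjugate is not an obstruction.
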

\begin{proof}
First we collect some useful facts regarding $g^*$: We have assumed  $g$ is $C^1$ and $\lim_{z\to\infty}g(z)/\|z\|=\infty$, therefore for all $\nu\in\mathbb{R}^k$ we have $g^*(\nu)=\nu\cdot z^*_\nu-g(z^*_\nu)$ for some $z^*_\nu\in\mathbb{R}^k$ with $\nabla g(z^*_\nu)=\nu$. In particular, $g^*$ is a real valued convex function on $\mathbb{R}^k$ and hence is   continuous.

 By convexity of $f^*$ and of the perspective of a convex function we see that the map $(0,\infty)\times\mathbb{R}\times\mathbb{R}^k\to(-\infty,\infty]$, 
\begin{align}
(\lambda,\beta,\nu)\mapsto \lambda E_P[f^*((\psi-\nu\cdot \phi-\beta)/\lambda)]
\end{align}
is convex.   Together with the convexity of $g^*$, we therefore conclude that  \eqref{eq:obj_fun_gen} is convex.

To prove (1), first use Lemma \ref{lemma:gen_UQ_temp} to conclude the following: For all $Q\in\mathcal{P}(\Omega)$  with $D_f(Q,P)<\infty$ we have $\phi_i\in L^1(Q)$, $i=1,...,k$,  and  for all $C\subset \mathbb{R}^k$ with $\prod_{i=1}^k[-M_{i,\eta}^-,M_{i,\eta}^+]\subset C$  we have
\begin{align}
&\sup_{Q:D_f(Q,P)\leq\eta}\{E_Q[\psi]-g(E_Q[\phi_1],...,E_Q[\phi_k])\}\\
=&\sup_{z\in C}\inf_{\nu\in\mathbb{R}^k}\{\nu\cdot z-g(z)+\inf_{\lambda>0,\beta\in\mathbb{R}}\left\{\beta+\eta\lambda+\lambda E_P[f^*((\psi-\nu\cdot\phi-\beta)/\lambda)]\right\}\}\,.\notag
\end{align}
The claimed equality, \req{eq:gen_UQ_final}, will follow if we can show that
\begin{align}\label{eq:UQ_gen_goal}
&\sup_{z\in C}\inf_{\nu\in\mathbb{R}^k}\{\nu\cdot z-g(z)+\inf_{\lambda>0,\beta\in\mathbb{R}}\left\{\beta+\eta\lambda +\lambda E_P[f^*((\psi-\nu\cdot\phi-\beta)/\lambda)]\right\}\}\\
=&\inf_{\lambda>0,\beta\in\mathbb{R},\nu\in\mathbb{R}^k}\left\{g^*(\nu)+\beta+\eta\lambda +\lambda E_P[f^*((\psi-\nu\cdot\phi-\beta)/\lambda)]\right\}\notag
\end{align}
for some set $C$ containing $\prod_{i=1}^k[-M_{i,\eta}^-,M_{i,\eta}^+]$.

\req{eq:UQ_gen_goal} is trivial if $E_P[f^*((\psi-\nu\cdot\phi-\beta)/\lambda)]=\infty$ for all $\nu\in\mathbb{R}^k$, $\lambda>0$, $\beta\in\mathbb{R}$, so suppose not.  First we rewrite the left hand side of \req{eq:UQ_gen_goal} in a more convenient form: The map  
\begin{align}
(\lambda,\beta,\nu)\mapsto \beta+\eta\lambda+\lambda E_P[f^*((\psi-\nu\cdot \phi-\beta)/\lambda)]
\end{align}
is  convex in $(\lambda,\beta,\nu)$ on $(0,\infty)\times\mathbb{R}\times\mathbb{R}^k$. Hence, minimizing over  $\lambda$ and $\beta$ results in a convex function of $\nu$, i.e.,
\begin{align}\label{eq:h_def}
h:\nu\in\mathbb{R}^k\mapsto \inf_{\lambda>0,\beta\in\mathbb{R}}\{ \beta+\eta\lambda+\lambda E_P[f^*((\psi-\nu\cdot \phi-\beta)/\lambda)]\}
\end{align}
is also convex, provided  that $h>-\infty$ (see Proposition 2.22 in \cite{rockafellar2009variational}). To see that $h>-\infty$, use the fact that  $f^*(y)\geq y$ to compute
\begin{align}
\inf_{\lambda>0,\beta\in\mathbb{R}}\{ \beta+\eta\lambda+\lambda E_P[f^*((\psi-\nu\cdot \phi-\beta)/\lambda)]\}\geq E_P[\psi]-E_P[\nu\cdot\phi]>-\infty.
\end{align}
Therefore Lemma   \ref{lemma:convex_min_ri} in Appendix \ref{app:convex_dis} implies that the infimum can be restricted to the relative interior of the domain of $h$:
\begin{align}\label{eq:gen_UQ_temp2}
&\sup_{z\in C}\inf_{\nu\in\mathbb{R}^k}\{\nu\cdot z-g(z)+\inf_{\lambda>0,\beta\in\mathbb{R}}\left\{\beta+\eta\lambda +\lambda E_P[f^*((\psi-\nu\cdot\phi-\beta)/\lambda)]\right\}\}\\
=&\sup_{z\in C}\inf_{\nu\in \ri(\dom h)}\{\nu\cdot z-g(z)+h(\nu)\}\,.\notag
\end{align}

 We now proceed to  show that the required equality \eqref{eq:UQ_gen_goal} holds for  $C=[-R,R]^k$ when $R$ is sufficiently large. We restrict to $R\geq R_0\equiv \max\{M_{i,\eta}^\pm\}$ so that $\prod_i[-M_{i,\eta}^-,M_{i,\eta}^+]\subset [-R,R]^k$.  We will now show that the supremum over $z$ and infimum over $\nu$ in \req{eq:gen_UQ_temp2} can be commuted: The map  $(\nu,z)\to \nu\cdot z-g(z)+h(\nu)$ is continuous on $\ri(\dom h)\times[-R,R]^k$, concave in $z$, convex in $\nu$.  The domain of  $z$ is compact, hence Sion's  minimax theorem (see \cite{sion1958,komiya1988}) implies
\begin{align}\label{eq:gen_UQ_goal2}
&\sup_{z\in[-R,R]^k}\inf_{\nu\in\ri(\dom h)}\{\nu\cdot z-g(z)+h(\nu)\}\\
=&\inf_{\nu\in \ri(\dom h)}\{\sup_{z\in[-R,R]^k}\{\nu\cdot z-g(z)\}+h(\nu)\}\leq\inf_{\nu\in \ri(\dom h)}\{g^*(\nu)+h(\nu)\}\,.\notag
\end{align}
Next we  show that equality holds in the last line of \eqref{eq:gen_UQ_goal2} when $R$ is sufficiently large:  We are in the case where  $E_P[f^*((\psi-\nu\cdot\phi-\beta)/\lambda)]<\infty$ for some $\nu_0\in\mathbb{R}^k$, $\lambda_0>0$, $\beta_0\in\mathbb{R}$ and so 
\begin{align}\label{eq:bounded_above}
\inf_{\nu\in \ri(\dom h)}\{g^*(\nu)+h(\nu)\}\leq g^*(\nu_0)+h(\nu_0)<\infty\,.
\end{align}
Convexity of $h$ implies that it has an affine lower bound. Hence there exists $D\geq 0$ such that $h(\nu)\geq -D\|\nu\|_1+d$ for all $\nu$.  Fix $\widetilde R>\max\{R_0,D\}$ and choose $\widetilde C>0$ such that
\begin{align}
&\widetilde R(1-D/\widetilde R)\widetilde C+d\\
> &\inf_{\nu\in\ri(\dom h)}\{g^*(\nu)+h(\nu)\}+\max_{w:w_i\in\{\pm \widetilde R\}, i=1,...,k} g(w)\,.\notag
\end{align}
Note that this is possible because of \req{eq:bounded_above}.  Finally,  $\lim_{z\to\infty}\|\nabla g(z)\|=\infty$ and so we can choose $R>\widetilde R$ such that   $\|\nabla g(z)\|_1>\widetilde C$ for all $z\not\in[-R,R]^k$.

To prove equality in \req{eq:gen_UQ_goal2}, let $\nu\in \ri(\dom h)$ and consider the following two cases.
\begin{enumerate}[label=\alph*.]
\item Suppose $\{z:\nabla g(z)=\nu\}\subset [-R,R]^k$:  We know that $g^*(\nu)=\nu\cdot z_\nu^*-g(z_\nu^*)$ where $\nabla g(z_\nu^*)=\nu$.  Therefore  $z_\nu^*\in[-R,R]^k$ and so
\begin{align}
&h(\nu)+\sup_{z\in[-R,R]^k}\{\nu\cdot z-g(z)\}\geq h(\nu)+ \nu\cdot z_\nu^*-g(z_\nu^*)\\
\geq&  h(\nu)+g^*(\nu)\geq\inf_{\nu\in\ri(\dom h)}\{g^*(\nu)+h(\nu)\}\,.\notag
\end{align}

\item Suppose there exists $z_0\not\in  [-R,R]^k$ with $\nabla g(z_0)=\nu$:  Let $w_i=\text{sgn}(\nu_i)\widetilde R$ so  that $w\in[-R,R]^k$, $\nu\cdot w=\widetilde R\|\nu\|_1$ and
\begin{align}
&h(\nu)+\sup_{z\in[-R,R]^k}\{\nu\cdot z-g(z)\}\geq h(\nu)+\widetilde R\|\nu\|_1-g(w)\\
\geq&\widetilde R(1-D/\widetilde R)\|\nabla g(z_0)\|_1+d-g(w)\,.\notag
\end{align}
The definitions of $R$ and $\widetilde R$ imply  $\|\nabla g(z_0)\|_1>\widetilde C$ and $\widetilde R(1-D/\widetilde R)>0$, hence
\begin{align}
&h(\nu)+\sup_{z\in[-R,R]^k}\{\nu\cdot z-g(z)\}\geq  \widetilde R(1-D/\widetilde R)\widetilde C+d-g(w)\\
\geq& \inf_{\nu\in\ri(\dom h)}\{g^*(\nu)+h(\nu)\}+\max_{w:w_i\in\{\pm \widetilde R\}, i=1,...,k} \{g(w)\}-g(w)\notag\\
\geq&\inf_{\nu\in\ri(\dom h)}\{g^*(\nu)+h(\nu)\}\,.\notag
\end{align}

\end{enumerate}
Combining these two cases, we see that
\begin{align}
&\inf_{\nu\in\ri(\dom h)}\left\{\sup_{z\in[-R,R]^k}\{\nu\cdot z-g(z)\}+h(\nu)\right\}\geq  \inf_{\nu\in\ri(\dom h)}\{g^*(\nu)+h(\nu)\}\,.
\end{align}
Therefore we have equality in  \req{eq:gen_UQ_goal2}. Applying Lemma \ref{lemma:convex_min_ri} to the continuous function $g^*$ and the convex function $h$ and then using the formula \eqref{eq:h_def} for $h$ we arrive at
\begin{align}\label{eq:gen_UQ_temp3}
&\sup_{z\in[-R,R]^k}\inf_{\nu\in\ri(\dom h)}\{\nu\cdot z-g(z)+h(\nu)\}=\inf_{\nu\in \mathbb{R}^k}\{g^*(\nu)+h(\nu)\}\\
=&\inf_{\lambda>0,\beta\in\mathbb{R},\nu\in\mathbb{R}^k}\{ g^*(\nu)+\beta+\eta\lambda+\lambda E_P[f^*((\psi-\nu\cdot \phi-\beta)/\lambda)]\}\,.\notag
\end{align}
Combining   \eqref{eq:gen_UQ_temp2} with \eqref{eq:gen_UQ_temp3} we arrive at \eqref{eq:UQ_gen_goal} and so the result is proven.

\end{proof}
\begin{corollary}
Applying Theorem \ref{thm:gen_UQ}  to $\phi$, $\psi\equiv \rho+ \phi^2$,  $g(z)=z^2$, and $g^*(\nu)=\nu^2/4$ we obtain the tight variance bound stated in Theorem \ref{thm:UQ_var_preview}.
\end{corollary}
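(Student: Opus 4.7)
The plan is to specialize Theorem \ref{thm:gen_UQ} to the choice $k = 1$, $\phi_1 = \phi$, $\psi = \rho + \phi^2$, $g(z) = z^2$, and then translate its conclusion into the form \req{eq:UQ_var}. The specialization will immediately yield both the claimed infimum expression (with $g^*(\nu) = \nu^2/4$) and the convexity of the objective in \eqref{eq:convex_objective}, so the only real work is to verify that the hypotheses of Theorem \ref{thm:gen_UQ} transfer from those of Theorem \ref{thm:UQ_var_preview}, and that the left-hand side of Theorem \ref{thm:gen_UQ} genuinely equals $\sup_Q\{E_Q[\rho] + \Var_Q[\phi]\}$ under the stated assumptions.

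For the hypothesis check, items (i)--(iii) of Theorem \ref{thm:gen_UQ} are inherited verbatim (with $k=1$). For item (iv), since $\phi^2 \geq 0$, I would bound $\psi^- = (\rho + \phi^2)^- \leq \rho^-$ pointwise and then invoke hypothesis (iv) of Theorem \ref{thm:UQ_var_preview} at $Q = P$ (which has $D_f(P,P) = 0 < \infty$) to obtain $E_P[\rho^-] < \infty$, hence $\psi^- \in L^1(P)$. For item (v), I would verify directly that $g(z) = z^2$ is convex and $C^1$, that $g^*(\nu) = \sup_z\{\nu z - z^2\} = \nu^2/4$, and that each of $g(z)/|z| = |z|$, $|\nabla g(z)| = 2|z|$, and $g^*(\nu)/|\nu| = |\nu|/4$ tends to infinity.

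With Theorem \ref{thm:gen_UQ} in hand, it remains to identify $E_Q[\rho + \phi^2] - (E_Q[\phi])^2$ with $E_Q[\rho] + \Var_Q[\phi]$ for every $Q$ with $D_f(Q,P) \leq \eta$. Conclusion (1) of Theorem \ref{thm:gen_UQ} already delivers $\phi \in L^1(Q)$, so $(E_Q[\phi])^2$ is a finite real number; this is exactly the first assertion of Theorem \ref{thm:UQ_var_preview}. Hypothesis (iv) of Theorem \ref{thm:UQ_var_preview} gives $E_Q[\rho^-] < \infty$, while $(\phi^2)^- = 0$, so both $E_Q[\rho]$ and $E_Q[\phi^2]$ exist in $(-\infty, \infty]$ with integrable negative parts. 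Standard additivity of the Lebesgue integral then yields $E_Q[\rho + \phi^2] = E_Q[\rho] + E_Q[\phi^2]$ in $(-\infty, \infty]$, and subtracting the finite quantity $(E_Q[\phi])^2$ produces exactly $E_Q[\rho] + \Var_Q[\phi]$.

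The main obstacle, such as it is, is not conceptual but a matter of bookkeeping: carefully tracking which quantities may legitimately be $+\infty$ versus genuinely finite, so that the splitting of $E_Q[\rho + \phi^2]$ and the subtraction of $(E_Q[\phi])^2$ are both justified without producing a forbidden $\infty - \infty$. Once this is taken care of, the corollary is a direct specialization of the already-proven Theorem \ref{thm:gen_UQ}.
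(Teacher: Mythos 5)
Your proposal is correct and follows exactly the route the paper intends: the corollary is a direct specialization of Theorem \ref{thm:gen_UQ} with $k=1$, $\psi=\rho+\phi^2$, $g(z)=z^2$, and your bookkeeping (in particular $\psi^-\leq\rho^-$ to get hypothesis (iv) at $Q=P$, and the splitting $E_Q[\rho+\phi^2]=E_Q[\rho]+E_Q[\phi^2]$ justified by integrable negative parts) fills in precisely the details the paper leaves implicit. The only nitpick is that the statement $\phi\in L^1(Q)$ comes from part (1) of Lemma \ref{lemma:gen_UQ_temp} (via Lemma \ref{lemma:EQ}) rather than from conclusion (1) of Theorem \ref{thm:gen_UQ} itself, but this is a labeling slip, not a gap.
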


\appendix

\section{ Tight  Bounds on Expected Values}\label{app:UQ_means}

A key ingredient in the proof of Theorem \ref{thm:UQ_var_preview} is the following tight  bound on expected values   over $f$-divergence neighborhoods which was proven for bounded integrands, $\phi$, in Theorem 1 of \cite{Javid2012} and was extended to  unbounded integrands in \cite{duchi2018learning}. 
\begin{theorem}\label{thm:mean_UQ}
Suppose:
\begin{enumerate}[label=\roman*.]
\item   $f\in\mathcal{F}_1(a,b)$ with $a\geq 0$.
\item  $P\in\mathcal{P}(\Omega)$. 
\item $\phi:\Omega\to\mathbb{R}$ is measurable and $\phi^- \in L^1(P)$.
\end{enumerate}

Then for all $\eta>0$ we have
\begin{align}\label{eq:UQ_means}
\sup_{Q:D_f(Q,P)\leq \eta}E_Q[\phi]=&\inf_{\lambda>0,\beta\in\mathbb{R}}\left\{(\beta+\eta\lambda )+\lambda E_P[f^*((\phi-\beta)/\lambda)]\right\}\,,
\end{align}
where we use the convention $\infty-\infty\equiv\infty$ to extend the definition of $E_Q[\phi]$  to all $Q$.
\end{theorem}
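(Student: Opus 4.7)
The plan is to establish the two inequalities in \eqref{eq:UQ_means} separately: weak duality via the Fenchel--Young inequality, and strong duality via a Lagrangian argument, first for bounded $\phi$ and then extended by truncation.

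For weak duality, fix $\lambda > 0$, $\beta \in \mathbb{R}$, and $Q$ with $D_f(Q,P) \leq \eta$ (if $Q \not\ll P$ then $D_f(Q,P) = \infty$ and $Q$ is vacuously excluded). The scaled Fenchel--Young inequality $uv \leq \lambda f(v) + \lambda f^*(u/\lambda)$, applied pointwise with $u = \phi - \beta$ and $v = dQ/dP$ and then integrated against $P$, gives
\begin{align*}
E_Q[\phi] - \beta \leq \lambda D_f(Q,P) + \lambda E_P[f^*((\phi - \beta)/\lambda)] \leq \lambda \eta + \lambda E_P[f^*((\phi - \beta)/\lambda)].
\end{align*}
The assumption $\phi^- \in L^1(P)$ combined with the elementary bound $f^*(y) \geq y$ ensures the right-hand side expectation is well-defined in $(-\infty, \infty]$, which justifies the rearrangement. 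Taking the supremum over $Q$ and infimum over $(\lambda, \beta)$ yields the $\leq$ direction.

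For strong duality with bounded $\phi \in \mathcal{M}_b(\Omega)$, the next step is to view the problem as a convex program over densities $p = dQ/dP$ subject to the essential range $p \in [a,b]$, the normalization $E_P[p] = 1$, and the divergence bound $E_P[f(p)] \leq \eta$. Introducing Lagrange multipliers $\lambda \geq 0$ for the divergence constraint and $\beta$ for normalization, the pointwise maximization of $p(\phi - \beta) - \lambda f(p)$ over $p \in [a,b]$ produces exactly $\lambda f^*((\phi - \beta)/\lambda)$ by the definition of the Legendre transform, so the dual coincides with the right-hand side of \eqref{eq:UQ_means}. Slater's condition is satisfied because $p \equiv 1$ (i.e., $Q = P$) is strictly feasible with $D_f(P,P) = 0 < \eta$, giving strong duality. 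This heuristic also identifies the primal extremizer as $dQ^*/dP = (f^*)'((\phi - \beta^*)/\lambda^*)$, with $\beta^*$ enforcing normalization and $\lambda^*$ enforcing the divergence constraint, consistent with the formal calculation in Section \ref{sec:formal_sol}.

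The extension to unbounded $\phi$ with only $\phi^- \in L^1(P)$ proceeds by applying the bounded case to the upper truncations $\phi_n \equiv \min(\phi, n)$ and letting $n \to \infty$. On the dual side, $f^*$ is nondecreasing, so $f^*((\phi_n - \beta)/\lambda) \nearrow f^*((\phi - \beta)/\lambda)$ and monotone convergence yields the limit of the dual objective in each $(\lambda, \beta)$. On the primal side, monotone convergence applied along a maximizing sequence of feasible $Q$ gives convergence of $\sup_Q E_Q[\phi_n]$ to $\sup_Q E_Q[\phi]$. The main obstacle lies precisely here: one must interchange the monotone $n$-limit with the infimum over $(\lambda,\beta)$ and with the supremum over $Q$ simultaneously, which requires a uniform affine lower bound on the dual objective (to prevent the infima drifting to $-\infty$) together with a tightness argument for near-optimizers. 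This delicate limiting step is the content of the extension in \cite{duchi2018learning} and is the substantive portion of the proof; the bounded case and weak duality are by comparison routine.
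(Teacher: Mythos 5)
The paper does not actually prove Theorem~\ref{thm:mean_UQ}: it imports the result, citing Theorem~1 of \cite{Javid2012} for bounded integrands and \cite{duchi2018learning} for the extension to unbounded ones, and only adds a remark (via $f^*(y)\geq y$) that the dual expectation exists in $(-\infty,\infty]$. So there is no in-paper proof to compare against, and your proposal should be judged as a reconstruction of the cited arguments. On that basis it is a faithful outline: the weak-duality half via the scaled Fenchel--Young inequality is correct and essentially complete, and the bounded case via Lagrangian duality with $Q=P$ as the Slater point is exactly the structure of the proof in \cite{Javid2012}. However, be aware that what you have written is an outline rather than a proof. You yourself flag that the interchange of the truncation limit with $\inf_{\lambda,\beta}$ is the substantive step and defer it to \cite{duchi2018learning}; since the paper does the same, this is acceptable here, but it means the hardest part is not actually carried out. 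One additional soft spot on the primal side: the claim that monotone convergence gives $E_Q[\phi_n]\nearrow E_Q[\phi]$ for each feasible $Q$ fails for a $Q$ with $E_Q[\phi^+]=E_Q[\phi^-]=\infty$, where the convention $\infty-\infty\equiv\infty$ makes $E_Q[\phi]=+\infty$ while every truncation gives $E_Q[\phi_n]=-\infty$; such measures must be handled separately (e.g., by showing that the existence of a feasible $Q$ with $E_Q[\phi^+]=\infty$ forces the dual infimum to be $+\infty$ as well), and this is part of what the cited extension actually does.
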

\begin{remark}
 Note that   $f^*(y)\geq y$ (see Appendix \ref{app:convex_dis}) implies
\begin{align}
f^*((\phi-\beta)/\lambda)\geq (\phi-\beta)/\lambda\in L^1(P)
\end{align}
for all $\beta\in\mathbb{R}$, $\lambda>0$, and hence $E_P[f^*((\phi-\beta)/\lambda)]$ in \req{eq:UQ_means} exists in $(-\infty,\infty]$.
\end{remark}

\section{Convex Functions and Optimization}\label{app:convex_dis}
Here we recall several key results from convex analysis and optimization that are needed above, as will as fix our notation; for much more detail on these subjects see, e.g.,   \cite{rockafellar1970convex,luenberger1997optimization,rockafellar2009variational}. We will also prove a pair of  lemmas that are  key tools in the proof of Theorem \ref{thm:UQ_var_preview} above. 

 We will denote the interior of a set $A\subset\mathbb{R}^n$ by $A^o$. We denote the relative interior of a convex set $C\subset\mathbb{R}^n$ by $\ri(C)$, the affine hull by $\aff(C)$, and the domain of   a convex function $f:C\to(-\infty,\infty]$ by $\dom f\equiv\{f<\infty\}$ (we do not allow convex functions to take the value $-\infty$ in this work). Recall that a convex function on $\mathbb{R}^n$ is continuous on the relative interior of its domain (see Theorem 10.1 in \cite{rockafellar1970convex}).   The Legendre transform of $f$, defined by
\begin{align}
f^*(y)=\sup_{x\in\mathbb{R}^n}\{yx-f(x)\}\,,
\end{align}
is a convex lower semicontinuous function on $\mathbb{R}^n$, provided that $f$ is not identically $\infty$.    We will  be especially concerned with the following classes of convex functions, which are used to define $f$-divergences:  For $-\infty\leq a<1<b\leq\infty$ we let  $\mathcal{F}_1(a,b)$ be  the set of convex functions $f:(a,b)\to\mathbb{R}$ with $f(1)=0$. Such functions are continuous and extend to convex, lower semicontinuous  functions, $f:\mathbb{R}\to(-\infty,\infty]$, by defining $f(a)=\lim_{t\searrow a}f(t)$ and $f(b)=\lim_{t\nearrow b}f(t)$ (where appropriate; if  $a$ and/or $b$ is finite then the corresponding limit is guaranteed to exist in $(-\infty,\infty]$) and $f|_{[a,b]^c}=\infty$. The Legendre transform of this extension can be computed via
\begin{align}\label{eq:f_star}
f^*(y)=\sup_{x\in (a,b)}\{yx-f(x)\}\,.
\end{align}
Note that  $f(1)=0$ implies that $f^*(y)\geq y$ for all $y\in\mathbb{R}$.  From \req{eq:f_star} we also see that if $a\geq 0$ then $f^*$ is nondecreasing and hence $(-\infty,d]\subset\dom f^*$ for some $d\in\mathbb{R}$.

The following lemma shows that to minimize the sum of a continuous and convex function, it suffices to minimize over the relative interior of the domain.
\begin{lemma}\label{lemma:convex_min_ri}
Let $g:\mathbb{R}^n\to\mathbb{R}$ be continuous and $f:\mathbb{R}^n\to(-\infty,\infty]$ be convex.  Then
\begin{align}
\inf_{\mathbb{R}^n}\{g+f\}=\inf_{\ri(\dom f)}\{g+f\}.
\end{align}
\end{lemma}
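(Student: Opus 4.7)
The direction $\inf_{\ri(\dom f)}\{g+f\} \geq \inf_{\mathbb{R}^n}\{g+f\}$ is immediate, since one is infimizing over a subset. My plan for the nontrivial direction is to show that each $x \in \dom f$ can be approximated from within $\ri(\dom f)$ along a line segment in such a way that $g+f$ stays close to $g(x)+f(x)$; together with the fact that points outside $\dom f$ contribute $+\infty$, this will give $\inf_{\ri(\dom f)}\{g+f\} \leq \inf_{\mathbb{R}^n}\{g+f\}$.

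After disposing of the trivial edge case $\dom f = \emptyset$ (both sides equal $+\infty$), I fix a reference point $y_0 \in \ri(\dom f)$, which exists because any nonempty convex subset of $\mathbb{R}^n$ has nonempty relative interior. For an arbitrary $x \in \dom f$, I form the segment $x_t = (1-t)x + t y_0$ with $t \in (0,1]$. The line-segment principle from convex analysis (see, e.g., Theorem 6.1 in \cite{rockafellar1970convex}) guarantees $x_t \in \ri(\dom f)$ for every such $t$, which is the essential geometric input.

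Next, I use convexity of $f$ to bound $f(x_t) \leq (1-t) f(x) + t f(y_0)$, noting both $f(x)$ and $f(y_0)$ are finite. As $t \searrow 0$, the right side tends to $f(x)$, and by continuity of $g$ we have $g(x_t) \to g(x)$, which together yield $\limsup_{t\searrow 0}\{g(x_t)+f(x_t)\} \leq g(x)+f(x)$. Hence for every $\varepsilon > 0$ some $x_t \in \ri(\dom f)$ satisfies $g(x_t)+f(x_t) \leq g(x)+f(x)+\varepsilon$, so $\inf_{\ri(\dom f)}\{g+f\} \leq g(x)+f(x)$. Taking the infimum over $x \in \dom f$, and noting that extending to $x \notin \dom f$ cannot lower $\inf_{\mathbb{R}^n}\{g+f\}$ because $f \equiv \infty$ there, gives the desired reverse inequality.

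There is no serious obstacle: the proof is essentially a one-shot application of the line-segment principle combined with the convexity bound on $f$ and continuity of $g$. The only subtle point is verifying that the convex-combination upper bound for $f(x_t)$ (rather than $f$ itself, which may be discontinuous at boundary points of $\dom f$) is what keeps the argument honest at relative-boundary points of $\dom f$.
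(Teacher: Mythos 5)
Your proof is correct and follows essentially the same route as the paper's: fix a point of $\ri(\dom f)$, move toward it along a segment using the line-segment principle (Theorem 6.1 of \cite{rockafellar1970convex}), bound $f$ along the segment by convexity, and use continuity of $g$ as $t\searrow 0$. No issues.
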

\begin{proof}
The result is trivial if $\dom f=\emptyset$ so suppose not. Then $\ri(\dom f)\neq \emptyset$ and we can fix $x_0\in \ri(\dom f)$. For $x\in \dom f$ we have $(1-t)x+tx_0\in \ri(\dom f)$ for all $t\in(0,1)$ (see Theorem 6.1 in \cite{rockafellar1970convex}) therefore, using convexity of $f$, we find
\begin{align}
\inf_{\ri(\dom f)}\{g+f\}\leq& g((1-t)x+tx_0)+ f((1-t)x+tx_0)\\
\leq& g((1-t)x+tx_0)+ (1-t)f(x)+tf(x_0)\notag
\end{align}
for all $t\in(0,1)$. By taking  $t\searrow 0$ we arrive at
\begin{align}\label{eq:inf_ri_ineq}
\inf_{\ri(\dom f)}\{g+f\}\leq g(x)+ f(x)
\end{align}
for all $x\in \dom f$. \req{eq:inf_ri_ineq} trivially holds when $x\not\in \dom f$ and hence we obtain
\begin{align}
\inf_{\ri(\dom f)}\{g+f\}\leq \inf_{\mathbb{R}^n}\{ g+ f\}\,.
\end{align}
The reverse inequality is trivial.

\end{proof}

The following lemma is one of our  key technical tools. It splits a convex minimization problem into an iterated optimization of Lagrangians over the level sets of a given linear function and relies on a variant of the Slater conditions. This result is new, to the best of the author's knowledge.
\begin{lemma}\label{lemma:constrained_disintegration} 
Let $V$ be a real vector space  and suppose:
\begin{enumerate}[label=\roman*.]
\item $X\subset V$, $C\subset\mathbb{R}^k$ are nonempty convex subsets,
\item $f:X\times C\to \mathbb{R}$ is jointly convex,
\item $h:V\to\mathbb{R}^k$ is linear with $h(X)\subset C$,
\item $\inf_{x\in X}\{f(x,z)+\nu \cdot(h(x)-z)\}>-\infty$ for all $\nu\in\mathbb{R}^k$, $z\in C$.
\end{enumerate}
Then for all $K\subset \mathbb{R}^k$ with $h(X)\subset K\subset C$ we have
 \begin{align}\label{eq:constrained_disint}
\inf_{x\in {X}} f(x,h(x))=\inf_{z\in K}\sup_{\nu\in\mathbb{R}^k}\inf_{x\in X}\{f(x,z)+\nu \cdot(h(x)-z)\}\,.
\end{align}
\end{lemma}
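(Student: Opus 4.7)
I would prove the two inequalities separately. The easy direction ($\leq$) is immediate: for each $x_0\in X$ set $z=h(x_0)$, which lies in $h(X)\subset K$, and substitute $x=x_0$ in the inner infimum; since $h(x_0)-z=0$, the $\nu$-term vanishes and $\sup_\nu \inf_{x\in X}\{f(x,h(x_0))+\nu\cdot(h(x)-h(x_0))\}\leq f(x_0,h(x_0))$. Infimizing over $x_0$ yields $\inf_{z\in K}L(z)\leq \inf_{x\in X}f(x,h(x))$, where $L(z)$ denotes the inner $\sup_\nu\inf_x$ on the right-hand side.

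For the hard direction, my plan is to recast the primal as the joint convex program
\[
\alpha := \inf_{x\in X}f(x,h(x)) = \inf\{f(x,y) : x\in X,\, y\in K,\, h(x)-y=0\},
\]
valid because $h(X)\subset K$, and to write its Lagrangian dual
\[
D_K(\nu) := \inf_{x\in X,\,y\in K}\{f(x,y) + \nu \cdot (h(x)-y)\}.
\]
Fixing $y=z$ for any $z\in K$ inside the joint infimum gives $D_K(\nu)\leq \inf_x\{f(x,z)+\nu\cdot(h(x)-z)\}$, hence $\sup_\nu D_K(\nu)\leq \inf_{z\in K}L(z)$. The lemma thereby reduces to the Lagrangian strong-duality identity $\sup_\nu D_K(\nu)=\alpha$.

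Strong duality, in turn, follows from the equality $p(0)=p^{**}(0)$ for the perturbation function $p(u):=\inf\{f(x,y): x\in X,\, y\in K,\, h(x)-y=u\}$, which is jointly convex, satisfies $p(0)=\alpha$, and by a short computation has biconjugate at the origin equal to $\sup_\nu D_K(\nu)$. Hypothesis (iv) plays the role of the Slater-type qualification substituting for the classical condition $0\in\operatorname{ri}(h(X)-K)$, which need not hold in the generality of the lemma: by guaranteeing finiteness of the partial Lagrangian $\inf_x\{f(x,z)+\nu\cdot(h(x)-z)\}$ for every $\nu\in\mathbb{R}^k$ and every $z\in C\supset K$, it endows $p$ with enough affine minorants near the origin to preclude a duality gap.

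The principal technical obstacle is exactly this strong-duality step, since (iv) does not directly imply Slater. One must therefore exploit (iv) through an approximation argument---producing $\epsilon$-suboptimal dual multipliers $\nu_\epsilon$ with $D_K(\nu_\epsilon)\geq\alpha-\epsilon$, for instance by a Hahn--Banach separation of the point $(0,\alpha)$ from a suitable convex set built out of the epigraph of $p$, with (iv) ensuring that the separating hyperplane is non-vertical.
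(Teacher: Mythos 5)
Your easy direction is fine, but the reduction of the hard direction to the single strong-duality identity $\sup_\nu D_K(\nu)=\alpha$ is where the argument breaks: that identity is \emph{false} under the hypotheses of the lemma, so no amount of work on the perturbation function $p$ can establish it. The right-hand side of \eqref{eq:constrained_disint} is an $\inf_z\sup_\nu\inf_x$, whereas $\sup_\nu D_K(\nu)=\sup_\nu\inf_{z,x}$; your chain $\sup_\nu D_K(\nu)\leq\inf_{z\in K}L(z)\leq\alpha$ is correct, but the first inequality can be strict. Concretely, take $V=\mathbb{R}$, $X=\{0\}$, $h=\mathrm{id}$ (so $h(X)=\{0\}$), $C=K=[0,1]$, and $f(0,y)=1$ if $y=0$, $f(0,y)=0$ if $y\in(0,1]$. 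This $f$ is convex on $\{0\}\times[0,1]$ (a convex function on $[0,1]$ may jump up at an endpoint), and hypothesis (iv) holds since $\inf_{x\in X}\{f(x,z)+\nu(h(x)-z)\}=f(0,z)-\nu z$ is finite for all $\nu,z$. Here $\alpha=f(0,0)=1$ and $\inf_{z\in K}L(z)=L(0)=1$ (for $z>0$ one has $L(z)=\sup_\nu\{-\nu z\}=\infty$), so the lemma's conclusion holds; but
\begin{align}
D_K(\nu)=\inf_{y\in[0,1]}\{f(0,y)-\nu y\}\leq\inf_{y\in(0,1]}\{-\nu y\}\leq 0
\end{align}
for every $\nu$, hence $\sup_\nu D_K(\nu)\leq 0<1=\alpha$. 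The underlying reason is exactly the one you flag and then wave away: $\dom p=h(X)-K$, and $0$ need not lie in $\ri(h(X)-K)$, so $p(0)>p^{**}(0)=\liminf_{u\to0}p(u)$ is possible. Hypothesis (iv) controls only the partial Lagrangian $\inf_x\{f(x,z)+\nu\cdot(h(x)-z)\}$ at each \emph{fixed} $z$; it says nothing about the joint infimum over $(x,y)$, and in the example the dual function is finite for every $\nu$ yet the gap persists, so "enough affine minorants" is not the obstruction and no non-vertical separating hyperplane exists.

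The paper avoids this by keeping the two-level structure and invoking strong duality only pointwise in $z$: for each fixed $z$ in the interior (or, after an affine reduction, the relative interior) of $h(X)$, the program $\inf\{f(x,z):x\in X,\ h(x)=z\}$ genuinely satisfies the Slater condition, giving $\inf_{x:h(x)=z}f(x,z)=L(z)$ there. It then shows $F(z)=L(z)$ is convex, finite on $h(X)$, and $+\infty$ off $\overline{h(X)}$, and uses Lemma \ref{lemma:convex_min_ri} to pass from $\inf_{h(X)^o}F$ to $\inf_K F$; the singleton case $h(X)=\{z_0\}$ is handled directly, where the term $\sup_\nu\{\nu\cdot(z_0-z)\}=\infty\cdot 1_{z\neq z_0}$ pins the outer infimum to $z=z_0$ --- precisely the mechanism your reordering of $\sup_\nu$ and $\inf_z$ destroys. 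To salvage your route you would have to work with $\inf_z\sup_\nu$ directly rather than with the joint Lagrangian dual.
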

\begin{proof}
We will need several properties of the function $F:C\to(-\infty,\infty]$ defined by
\begin{align}
F(z)=\sup_{\nu\in\mathbb{R}^k}\inf_{x\in X}\{f(x,z)+\nu \cdot(h(x)-z)\}\,.
\end{align}
The function $(z,x)\mapsto f(x,z)+\nu\cdot(h(x)-z)$ is convex and so $z\mapsto \inf_{x\in X}\{f(x,z)+\nu \cdot(h(x)-z)\}$ is convex for all $\nu$ (see Proposition 2.22 in \cite{rockafellar2009variational} and note that it never equals $-\infty$ by assumption (iv)).  This holds for all $\nu$ and so $F$ is convex (see Proposition 2.9 in \cite{rockafellar2009variational}).  We extend $F$ by $F|_{C^c}\equiv\infty$; this extension is also convex.

Next we show that $F$ is finite on $h(X)$: Let $z_0\in h(X)$.  Then there exists $x_0\in X$ with $h(x_0)=z_0$ and for any $\nu$ we have 
\begin{align}
\inf_{x\in X}\{f(x,z_0)+\nu \cdot(h(x)-z_0)\}\leq f(x_0,z_0)+\nu \cdot(h(x_0)-z_0)= f(x_0,z_0)\,.
\end{align}
Hence
\begin{align}
F(z_0)=\sup_{\nu\in\mathbb{R}^k}\inf_{x\in X}\{f(x,z_0)+\nu \cdot(h(x)-z_0)\}\leq  f(x_0,z_0)<\infty\,.
\end{align}

Finally we show that $F(z)=\infty$ for all  $z\in\overline{ h(X)}^c\cap C$: Let $z_0\in\overline{ h(X)}^c\cap C$.  The separating hyperplane theorem implies that there exists $v\in\mathbb{R}^k$, $v\neq 0$, and $c\in(0,\infty)$ such that $v\cdot(z_0-w)\geq c$ for all $w\in  h(X)$.  Letting $\nu=-tv$, $t>0$, we have
\begin{align}
F(z_0)\geq& \inf_{x\in X}\{f(x,z_0)-tv \cdot(h(x)-z_0)\} = \inf_{x\in X}\{f(x,z_0)+tv \cdot(z_0-h(x))\}\\
\geq &tc+\inf_{x\in X}f(x,z_0)\,.\notag
\end{align}
From assumption (iv) we see that $\inf_{x\in X}f(x,z_0)>-\infty$ and so by taking $t\to\infty$ we find $F(z_0)=\infty$.

Now we prove the claimed equality \eqref{eq:constrained_disint}.  If $h(X)=\{z_0\}$ for some $z_0$ then 
\begin{align}
\inf_{x\in {X}} f(x,h(x))=\inf_{x\in {X}} f(x,z_0)
\end{align}
 and
\begin{align}
&\inf_{z\in K}\sup_{\nu\in\mathbb{R}^k}\inf_{x\in X}\{f(x,z)+\nu \cdot(h(x)-z)\}=
\inf_{z\in K}\{\inf_{x\in X}f(x,z)+\sup_{\nu\in\mathbb{R}^k}\{\nu \cdot(z_0-z)\}\}\\
=&\inf_{z\in K}\{\inf_{x\in X}f(x,z)+\infty 1_{z\neq z_0}\}=\inf_{x\in X}f(x,z_0)\,.\notag
\end{align}
This proves the claim in this case.

Now suppose $h(X)^o\neq \emptyset$: As a first step, we have
\begin{align}
\inf_{x\in X}f(x,h(x))=\inf_{z\in h(X)}\inf_{x\in X:h(x)=z}f(x,z)\,.
\end{align}
Fix $z_0\in h(X)^o$. $h(X)$ is convex, hence for any $\tilde z\in h(X)$ and any $t\in[0,1)$ we have $(1-t)z_0+t\tilde z\in h(X)^o$ (see Theorem 6.1 in \cite{rockafellar1970convex}). Take $x_0,\tilde x\in X$ with $h(x_0)=z_0$ and  $h(\tilde x)=\tilde z$.  Then   $(1-t)x_0+t\tilde x\in X$, $h((1-t)x_0+t\tilde x)=(1-t)z_0+t\tilde z$ and
\begin{align}
&\inf_{z\in h(X)^o}\inf_{x\in X:h(x)=z}f(x,h(x))\leq f((1-t)x_0+t\tilde x,(1-t)z_0+t\tilde z)\\
\leq& tf(\tilde x, \tilde z)+(1-t)f(x_0,z_0)\notag
\end{align}  
for all $t\in[0,1)$.  Taking $t\nearrow 1$ we find
\begin{align}
& \inf_{z\in h(X)^o}\inf_{x\in X:h(x)=z}f(x,h(x))\leq f(\tilde x,\tilde z)
\end{align} 
for all $\tilde x\in X$ with $h(\tilde x)=\widetilde z$ and hence
\begin{align}
\inf_{x\in X}f(x,h(x))=&\inf_{z\in h(X)}\inf_{x\in X:h(x)=z}f(x,h(x))= \inf_{z\in h(X)^o}\inf_{x\in X:h(x)=z}f(x,z)\,.
\end{align}
For $z\in h(X)^o$ the Slater conditons hold for the convex function $f(\cdot,z)$ and affine constraint $h(\cdot)-z$ (see Theorem 8.3.1 and Problem 8.7  in \cite{luenberger1997optimization}) and so we have strong duality:
\begin{align}
\inf_{x\in X:h(x)=z}f(x,z)=\sup_{\nu\in\mathbb{R}^k}\inf_{x\in X}\{f(x,z)+\nu\cdot(h(x)-z)\}\,.
\end{align}
Therefore $\inf_{x\in X}f(x,h(x))=  \inf_{z\in h(X)^o}F(z)$. The properties of $F$ proven above imply   
\begin{align}\label{eq:dom_F}
h(X)\subset \dom F\subset \overline{h(X)}
\end{align}
 and hence $(\dom F)^o=h(X)^o$ (see Theorem 6.3 in \cite{rockafellar1970convex}).  Therefore Lemma \ref{lemma:convex_min_ri} implies $\inf_{h(X)^o}F=\inf_{(\dom F)^o}F=\inf_{\mathbb{R}^k} F$ and so for any $K$ with $h(X)\subset K\subset C$ we have
\begin{align}
\inf_{x\in X}f(x,h(x))=  \inf_{z\in K}F(z)
\end{align}
as claimed.

Finally when $h(X)$ contains more than one element and $h(X)^o=\emptyset$ we will transform the problem into an equivalent one with that fits under the previously proven case. Intuitively, in this case $h(X)$ lies in a hyperplane and has nonempty relative interior. Hence by using an affine transformation we can push it down to a lower dimensional space where it has nonempty interior. More precisely, there exists $m\in\mathbb{Z}^+$ and affine maps $\Phi:\mathbb{R}^m\to\aff(h(X))$, $\Psi:\mathbb{R}^k\to\mathbb{R}^m$ such that $\Phi^{-1}=\Psi|_{\aff(h(X))}$ and $\Psi( h(X)))^o \neq \emptyset$. 
Write $\Phi(\cdot)=A(\cdot)+a$ and $\Psi(\cdot)=B(\cdot)+b$ with $A,B$ linear and $a,b$ constant. The above properties  imply that $\Phi\circ\Psi\circ h|_X=h|_X$ and $(B h(X))^o\neq\emptyset$. Define
\begin{enumerate}
\item $\widetilde C=Bh(X)$, a nonempty convex subset of $\mathbb{R}^m$,
\item $\widetilde f:X\times\widetilde C\to\mathbb{R}$, $\widetilde f(x,z)=f(x,a+A(b+z))$, a convex map,
\item $\widetilde h:V\to\mathbb{R}^m$, $\widetilde h=Bh$, a linear map with $\widetilde h(X)= \widetilde C$.
\end{enumerate}
For any $\nu\in\mathbb{R}^m$, $\tilde z\in\widetilde C$ we have $\tilde z=Bz$ for some $z\in h(X)\subset C$, hence
\begin{align}
&\inf_{x\in X}\{\widetilde f(x,\tilde z)+\nu\cdot(\widetilde h(x)-\tilde z)\}=\inf_{x\in X}\{f(x,a+A(b+Bz))+\nu\cdot B(h(x)-z)\}\\
=&\inf_{x\in X}\{f(x,\Phi(\Psi(z)))+ (B^T\nu)\cdot (h(x)-z)\}=\inf_{x\in X}\{f(x,z)+ (B^T\nu)\cdot (h(x)-z)\}>-\infty\notag
\end{align}
by assumption (iv).   In addition, $\widetilde h(X)^o=\widetilde C^o=(Bh(X))^o\neq\emptyset$. Therefore  $\widetilde f,\widetilde h$ satisfy the assumptions (i)-(iv) of this lemma and  this system falls under the previously proven case.  Hence  
\begin{align}
\inf_{x\in X}f(x,h(x))=&\inf_{x\in X}\widetilde f(x,\widetilde h(x))=\inf_{\tilde z\in \widetilde C}\sup_{\tilde \nu\in\mathbb{R}^m}\inf_{x\in X}\{\widetilde f(x,\tilde z)+\tilde \nu \cdot(\widetilde h(x)-\tilde z)\}\\
=&\inf_{z\in h(X)}\sup_{\tilde\nu\in\mathbb{R}^m}\inf_{x\in X}\{f(x,\Phi(\Psi(z)))+ \tilde\nu\cdot ( Bh(x)-Bz)\}\notag\\
=&\inf_{z\in h(X)}\sup_{\tilde\nu\in\mathbb{R}^m}\inf_{x\in X}\{f(x,z)+ (B^T\tilde\nu) \cdot ( h(x)-z)\}\notag\\
\leq &\inf_{z\in h(X)} \sup_{\nu\in\mathbb{R}^k}\inf_{x\in X}\{f(x,z)+\nu \cdot ( h(x)-z)\}\notag\\
\leq&\inf_{z\in h(X)}\inf_{x\in X:h(x)=z}f(x,z)=\inf_{x\in X}f(x,h(x))\,,\notag
\end{align}
where the last inequality is due to weak duality.

Therefore we have proven
\begin{align}
\inf_{x\in X}f(x,h(x))= &\inf_{z\in  h(X)}\sup_{\nu\in\mathbb{R}^k}\inf_{x\in X}\{f(x,z)+\nu \cdot ( h(x)-z)\}=\inf_{z\in  h(X)}F(z).
\end{align}
From Lemma \ref{lemma:convex_min_ri} we find $\inf_{\ri(\dom F)}F=\inf_{\mathbb{R}^k} F$. From \req{eq:dom_F} we see that $\dom F\cap \ri(h(X))=\ri(h(X))\neq\emptyset$ and so $\dom F$ is not contained in the relative boundary of $h(X)$.  Therefore Corollary 6.5.2 in \cite{rockafellar1970convex} implies $\ri(\dom F)\subset \ri(h(X))$ and hence for any $K\subset \mathbb{R}^k$ with $h(X)\subset K\subset C$  we have 
\begin{align}
\inf_{x\in X}f(x,h(x))=&\inf_{z\in  h(X)}F(z)=\inf_{z\in K} F(z)\\
=&\inf_{z\in K}\sup_{\nu\in\mathbb{R}^k}\inf_{x\in X}\{f(x,z)+\nu \cdot(h(x)-z)\}\,,\notag
\end{align}
which proves the result in this final case. 
\end{proof}

\section*{Acknowledgments}
The research of J.B. was partially supported by NSF TRIPODS CISE-1934846.  J.B. would like to thank Luc Rey-Bellet for helpful discussions.

\bibliographystyle{amsplain}
\bibliography{refs}

\end{document}